\newtheorem{theorem}{Theorem}[section]
\newtheorem{lemma}[theorem]{Lemma}
\newtheorem{corollary}[theorem]{Corollary}
\theoremstyle{definition}
\newtheorem{definition}[theorem]{Definition}
\newtheorem{statement}[theorem]{Statement}
\newtheorem{question}[theorem]{Question}
\newcommand{\res}{\upharpoonright}
\newcommand{\ran}{\operatorname{ran}}
\renewcommand{\mod}{\text{ }\textrm{mod}\text{ }}
\newcommand{\seq}[1]{\langle #1 \rangle}
\newcommand{\RCA}{\mathsf{RCA}}
\newcommand{\ACA}{\mathsf{ACA}}
\newcommand{\RT}{\mathsf{RT}}
\newcommand{\COH}{\mathsf{COH}}
\newcommand{\SRT}{\mathsf{SRT}}
\newcommand{\D}{\mathsf{D}}
\newcommand{\RADOMIL}{i}
\newcommand{\cred}{\leq_{\text{\upshape c}}}
\newcommand{\ncred}{\nleq_{\text{\upshape c}}}
\newcommand{\scred}{\leq_{\text{\upshape sc}}}
\newcommand{\nscred}{\nleq_{\text{\upshape sc}}}
\newcommand{\ured}{\leq_{\text{\upshape u}}}
\newcommand{\nured}{\nleq_{\text{\upshape u}}}
\newcommand{\sured}{\leq_{\text{\upshape su}}}
\newcommand{\nsured}{\nleq_{\text{\upshape su}}}
\begin{document}

\title{Strong reductions between combinatorial principles}

\author{Damir D. Dzhafarov}
\address{Department of Mathematics\\
University of Connecticut\\
196 Auditorium Road\\ Storrs, Connecticut 06269 U.S.A.}
\email{damir@math.uconn.edu}

\dedicatory{Dedicated to Robert I.\ Soare, on the occasion of his retirement.}

\thanks{The author was partially supported by NSF grant DMS-1400267, and is grateful to Denis Hirschfeldt for numerous helpful discussions.}

\maketitle

\begin{abstract}
This paper is a contribution to the growing investigation of strong reducibilities between $\Pi^1_2$ statements of second-order arithmetic, viewed as an extension of the traditional analysis of reverse mathematics. We answer several questions of Hirschfeldt and Jockusch~\cite{HJ-TA} about uniform and strong computable reductions between various combinatorial principles related to Ramsey's theorem for pairs. Among other results, we establish that the principle $\SRT^2_2$ is not uniformly or strongly computably reducible to $\D^2_{<\infty}$, that $\COH$ is not uniformly reducible to $\SRT^2_{<\infty}$, and that $\COH$ is not strongly reducible to $\SRT^2_2$. The latter also extends a prior result of Dzhafarov~\cite{Dzhafarov-2015}. We introduce a number of new techniques for controlling the combinatorial and computability-theoretic properties of the problems and solutions we construct in our arguments.
\end{abstract}

\section{Introduction}

The traditional approach in reverse mathematics has been to compare a given theorem with several benchmark subsystems of second-order arithmetic, and isolate the weakest that it can be proved in, and the strongest that can in turn be proved from it, all over the base theory $\RCA_0$. Often, these two subsystems coincide, meaning that the given theorem is actually equivalent over $\RCA_0$ to one of the benchmark subsystems. The early tendency in the subject was to focus almost exclusively on theorems of this kind, giving rise to what is often called the ``big five phenomenon'' (see, e.g., Montalb\'{a}n~\cite{Montalban-2011}, Section 1), after the five principal subsystems that arise most commonly in this endeavor. The past decade, however, has seen a growing shift away from this tendency, and towards looking at principles that do not admit equivalences to any of the benchmark subsystems. Spurred on primarily by interest in the strength of Ramsey's theorem for pairs, this new direction has since generated a zoo (see~\cite{Dzhafarov-zoo}) of inequivalent principles from various branches of mathematics with an intricate and fascinating structure of relationships. We refer the reader to Hirschfeldt's wonderful new text~\cite{Hirschfeldt-2014} for an introduction and summary. For a comprehensive general background on reverse mathematics, we refer to Simpson~\cite{Simpson-2009}.

\subsection{Strong reductions}

We take up an even newer direction in reverse mathematics, in which we replace provability over $\RCA_0$ by stronger relations, as a means of getting at finer distinctions between members of the above-mentioned zoo. The majority of principles studied in reverse mathematics are $\Pi^1_2$ statements of the form
\[
\forall X \Theta(X) \to \exists Y \Psi(X,Y),
\]
where $\Theta$ and $\Psi$ are arithmetical properties of sets. The sets $X$ satisfying $\Theta$ are then called the \emph{instances} of the principle, and the sets $Y$ satisfying $\Psi(X,Y)$ are the \emph{solutions} to $X$. To be precise, the above representation need not be unique, but for each principle we typically have a particular one in mind, and so with it, a particular set of instances and solutions. For example, in the statement of K\"{o}nig's lemma, that every finitely-branching tree of infinite height has an (infinite) path through it, the instances are the finitely-branching trees of infinite height, and the solutions the paths. There is a well-understood correspondence between the strength of a theorem, in the sense of reverse mathematics, and the relative complexities, in the sense of computability theory, of instances and solutions of that theorem
(see~\cite{HS-2007}, Section 1). For instance, if for every set $X$, principle $\mathsf{P}$ has an $X$-computable instance all of whose solutions compute $X'$, while principle $\mathsf{Q}$ satisfies that every $X$-computable instance of it has an $X'$-computable solution, then every $\omega$-model of $\mathsf{P}$ will also be a model of $\mathsf{Q}$. Thus, modulo issues of induction, a computability-theoretic fact yields an implication over $\RCA_0$.

More generally, consider the following reducibilities between $\Pi^1_2$ statements.

\begin{definition}\label{D:reductions}
	Let $\mathsf{P}$ and $\mathsf{Q}$ be $\Pi^1_2$ principles.
	\begin{enumerate}
		\item $\mathsf{Q}$ is \emph{computably reducible} to $\mathsf{P}$, written $\mathsf{Q} \cred \mathsf{P}$, if every instance $X$ of $\mathsf{P}$ computes an instance $\widehat{X}$ of $\mathsf{P}$, such that for every solution $\widehat{Y}$ to $\widehat{X}$, we have that $X \oplus \widehat{Y}$ computes a solution $Y$ to $X$.
		\item $\mathsf{Q}$ is \emph{strongly computably reducible} to $\mathsf{P}$, written $\mathsf{Q} \scred \mathsf{P}$, if every instance $X$ of $\mathsf{P}$ computes an instance $\widehat{X}$ of $\mathsf{P}$, such that every solution $\widehat{Y}$ to $\widehat{X}$ computes a solution $Y$ to $X$.
		\item $\mathsf{Q}$ is \emph{uniformly reducible} to $\mathsf{P}$, written $\mathsf{Q} \ured \mathsf{P}$, if there exist Turing functionals $\Phi$ and $\Psi$ such that for every instance $X$ of $\mathsf{P}$, we have that $\Phi^X$ is an instance of $\mathsf{P}$, and for every solution $\widehat{Y}$ to $\Phi^X$ we have that $\Psi^{X \oplus \widehat{Y}}$ is a solution to $X$.
		\item $\mathsf{Q}$ is \emph{strongly uniformly reducible} to $\mathsf{P}$, written $\mathsf{Q} \sured \mathsf{P}$, if there exist Turing functionals $\Phi$ and $\Psi$ such that for every instance $X$ of $\mathsf{P}$, we have that $\Phi^X$ is an instance of $\mathsf{P}$, and for every solution $\widehat{Y}$ to $\Phi^X$ we have that $\Psi^{\widehat{Y}}$ is a solution to $X$.
	\end{enumerate}
\end{definition}

\noindent The notions of $\scred$ and $\cred$ arise frequently in the proofs of implications over $\RCA_0$, but were first formally studied by~Dzhafarov\cite{Dzhafarov-2015}. For $\Pi^1_2$ principles, $\ured$ and $\sured$ are equivalent to Weihrauch reducibility and strong Weihrauch reducibility, respectively, which were introduced by Weihrauch~\cite{Weihrauch-1992} and have been widely applied in the study of computable analysis. (See~\cite{DDHMS-TA}, Appendix A, for a proof of the equivalences.) In the context of principles studied in reverse mathematics, these were defined independently by Doreais et al.~\cite{DDHMS-TA}.

It is easy to see that all of these reducibilities are transitive, and Figure~\ref{F:reductionsrelations} summarizes the relationships hold between them. Furthermore, if $\mathsf{Q}$ is reducible to $\mathsf{P}$ according to any one of these notions, then, as in the example above, every $\omega$-model of $\mathsf{P}$ is an $\omega$-model of $\mathsf{Q}$. Not every implication over $\RCA_0$, or even over $\omega$-models, must come from some such reduction, but in practice, most do. (Hirschfeldt and Jockusch~\cite{HJ-TA}, Section 4.1, introduced a related reduction notion that does capture implication over $\omega$-models.) Consequently, our motivation for studying principles under these reducibilities, as opposed to the traditional framework of reverse mathematics, is twofold. First, we can tease apart subtle differences between principles that are impossible to detect with provability in $\RCA_0$ alone. And second, where we do not know if there is an implication over $\RCA_0$ between two principles, we can lend credence to a negative answer by showing that some (or none) of the above stronger reducibilities hold. As a way of extending the scope of reverse mathematics, this program has been seeing increasing interest, as we describe further below.

\begin{figure}
\[
\xymatrix{
& \sured \ar[dl] \ar[dr] \\
\scred \ar[dr] & & \ured \ar[dl] \\
& \cred
}
\]
\caption[]{Relations between notions of reduction. An arrow from one reducibility to another means that whenever $\mathsf{Q}$ is reducible to $\mathsf{P}$ according to the first then it is also reducible according to the second. In general, no relations hold other than the ones shown.}\label{F:reductionsrelations}
\end{figure}
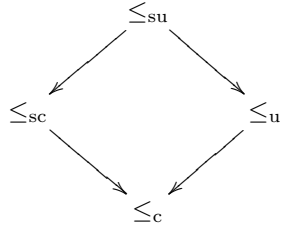

In the present article, we apply this analysis, for both of the above-mentioned motivations, to several combinatorial principles related to Ramsey's theorem for pairs. Our work is inspired by Questions~\ref{Q:HJ}~(1)--(4) below, posed by Hirschfeldt and Jockusch in a draft of~\cite{HJ-TA}, which we provide answers to here.


\subsection{Ramsey's theorem}

For a set $X$, let $[X]^n$ denote the set of all subsets $F$ of $X$ of size $n$. We identify each number $k \geq 1$ with its initial segment $\{0,\ldots,k-1\}$, and define a \emph{$k$-coloring of $[X]^n$} to be a map $c : [X]^n \to k$. As is customary, if $F \in [X]^n$ and the members of $F$ are $x_0 < \cdots < x_{n-1}$, we write $c(x_0,\ldots,x_{n-1})$ in place of $c(F)$. A set $H \subseteq X$ is \emph{homogeneous} for $c$ if there is a $j < k$ such that $c$ is constant on $[H]^n$ and equal to $j$, in which case we also say $H$ is homogeneous \emph{with color $j$}. Ramsey's theorem is the following principle. Typically, $X$ below will be $\omega$.

For all $n,k \geq 1$, we define the following $\Pi^1_2$ principle.

\begin{statement}[$\RT^n_k$]
	Every coloring $c : [\omega]^n \to k$ has an infinite homogeneous set.
\end{statement}

\noindent For any (standard) $k$ and $j$, it is easy to see that $\RT^n_k$ is equivalent over $\RCA_0$ to $\RT^n_j$. However, this result turns out to be sensitive to the distinctions drawn by Definition~\ref{D:reductions}. While $\RT^n_j \sured \RT^n_k$ for all $j < k$, it was shown by Dorais et al.~\cite[Theorem 3.1]{DDHMS-TA} that $\RT^n_k \nsured \RT^n_j$. This was subsequently improved by Hirschfeldt and Jockusch~\cite[Theorem 3.3]{HJ-TA} to show that also $\RT^n_k \nured \RT^n_j$, and more recently by Patey~\cite[Corollary 3.15]{Patey-TA} to show that even $\RT^n_k \ncred \RT^n_j$.

Note that $\RT^1_k$ is just the pigeonhole principle, and as such is provable in $\RCA_0$.  It follows by a well-known result of Jockusch~\cite[Lemma 5.9]{Jockusch-1972} that if $n \geq 3$ then $\RT^n_k$ is equivalent to $\ACA_0$ over $\RCA_0$. By contrast, Seetapun (see~\cite{SS-1995}, Theorem 2.1) showed that this is false for the $n =2$ case: $\RT^2_k$ is strictly weaker than $\ACA_0$. Seetapun's theorem set off an industry of research over the past twenty years into pinning down the precise strength of $\RT^2_k$. A prominent approach to this problem is that of Cholak, Jockusch, and Slaman~\cite[Section 7]{CJS-2001}, who introduced two closely related principles that $\RT^2_2$ can be decomposed into. The first is a restriction to a special class of colorings: say $c : [\omega]^2 \to k$ is \emph{stable} if for each $x$ the limit $\lim_y c(x,y)$ exists, meaning there is $j < k$ and a $z$ such that $c(x,y) = j$ for all $y \geq z$.

\begin{statement}[$\SRT^2_k$]
	Every stable coloring $c : [\omega]^2 \to k$ has an infinite homogeneous set.
\end{statement}

\noindent We let $\SRT^2_{<\infty}$ denote the statement $\forall k\,\SRT^2_k$ (and reserve the notation $\SRT^2$ for when we do not wish to specify a subscript). Typically, one works not with $\SRT^2_k$ directly, but with $\D^2_k$, defined as follows. Call a set $L$ \emph{limit-homogeneous} for a stable coloring $c : [\omega]^2 \to k$ if there is a $j < k$ such that $\lim_y c(x,y) = j$ for all $x \in L$. Thus, every homogeneous set is obviously limit-homogeneous. Conversely, if L is limit-homogeneous, then $c \oplus L$ can uniformly compute a homogeneous set by thinning.

\begin{statement}[$\D^2_k$]
	Every stable coloring $c : [\omega]^2 \to k$ has an infinite limit-homogeneous set.
\end{statement}

\noindent We let $\D^2_{<\infty}$ denote the statement $\forall k\,\D^2_k$. Using the limit lemma, it is easy to see that $\SRT^2_k$ and $\D^2_k$ are computably equivalent, as are $\SRT^2_{<\infty}$ and $\D^2_{<\infty}$ (see~\cite{CJS-2001}, Lemma 3.5, for a proof.) That this can be formalized in $\RCA_0$ is nontrivial, and was shown by Chong, Lempp, and Yang~\cite[Theorem 1.4]{CLY-2010}. As for $\RT$, there has been interest in $\SRT^2$ and $\D^2$ for different numbers of colors: Patey~\cite[Theorem 3.14]{Patey-TA} has shown that $\D^2_k \ncred \RT^2_j$ whenever $j < k$.

The second related principle introduced by Cholak, Jockusch, and Slaman~\cite{CJS-2001} is a sequential version of $\RT^1_2$ with finite errors. Given a family of sets $\vec{X} = \seq{X_n : n \in \omega}$, a set $Y$ is called \emph{$\vec{X}$-cohesive} if for each $n$, either $Y \cap X_n$ or $Y \cap \overline{X_n}$ if finite.

\begin{statement}[$\COH$]
	Every family of sets $\vec{X} = \seq{X_n : n \in \omega}$ has an infinite cohesive set $Y$.
\end{statement}

\noindent Over $\RCA_0$, the principles $\RT^2_k$ and $\SRT^2_k + \COH$ are equivalent (see Mileti~\cite{Mileti-2004}, Corollary A.1.4), and for many years it was a major open question in reverse mathematics whether this split is proper. Hirschfeldt et al.~\cite[Theorems 2.3 and 3.7]{HJKLS-2008} constructed an $\omega$-model of $\COH + \neg \SRT^2_2$, thereby showing that $\COH$ does not imply $\RT^2_2$, but the question of whether $\SRT^2_2$ implies $\COH$ was only answered considerably later, by Chong, Slaman, and Yang~\cite{CSY-TA}. The model for the the latter separation, however, has nonstandard first-order part, and so the question of whether every $\omega$-model of $\SRT^2_2$ is also a model of $\COH$ remains open.

In light of the discussion above, one way of moving towards a negative answer to this question is to compare the strengths of $\COH$ and $\SRT^2_k$ under the stronger reducibilities of Definition~\ref{D:reductions}. As a first step in this direction, Dzhafarov~\cite[Corollary 1.10]{Dzhafarov-2015} showed that $\COH \nscred \D^2_{<\infty}$. In an earlier draft of their own paper on stronger reducibilities between variants of Ramsey's theorem (since updated to reflect our results below), Hirschfeldt and Jockusch ~\cite{HJ-TA} asked about extensions of this fact.
\begin{question}[Hirschfeldt and Jockusch]\label{Q:HJ}
\
	\begin{enumerate}
		\item Is it the case that $\SRT^2_2 \ured \D^2_2$?
		\item Is it the case that $\SRT^2_2 \scred \D^2_2$?
		\item Is it the case that $\COH \ured \SRT^2_2$?
		\item Is it the case that $\COH \scred \SRT^2_2$?
	\end{enumerate}
\end{question}

\noindent Note that the fourth question is only of interest if the second has a negative answer, since otherwise its answer is no by the corresponding result for $\D^2_2$ mentioned above. The second in turn motivates the first, and the fourth motivates the third. There is an additional reason the first question is interesting. Namely, Hirschfeldt and Jockusch~\cite[Proposition 4.8]{HJ-TA} showed that the answer to it is almost yes, in that $\SRT^2_2$ is \emph{generalized} uniformly reducible to $\D^2_2$, namely to two iterated applications of $\D^2_2$ (see~\cite{HJ-TA}, Section 4.2, for precise definitions).

We give negative answers to all four of the above questions, and along the way obtain other results about these, and related, principles with regards to uniform and strong computable reducibility. In the case of each of the first three questions, our proofs will use an ad hoc combinatorial argument to diagonalize solutions to instances of the principle on the left, plus an intricate generalization of the proof of Seetapun's theorem from~\cite{SS-1995} to build solutions to instances of the principle on the right. Consequently, we dedicate Section~\ref{S:gencore} to developing this generalization. By contrast, to obtain a negative answer to the fourth question we introduce a new method for directly constructing homogeneous sets for colorings. While the arguments based on our extension of Seetapun's argument produce relatively effective constructions, the same is not true of this new method, which uses countably many iterates of the hyperjump.

Our computability-theoretic notation and conventions throughout will be standard, following Soare~\cite{Soare-TA} or Downey and Hirschfeldt~\cite{DH-2010}.

\section{A generalization of Seetapun's argument}\label{S:gencore}

For a tree $T \subseteq \omega^{< \omega}$, we write $\ran(T)$ for $\bigcup_{\alpha \in T} \ran(\alpha)$, and $|T|$ for $\sup_{\alpha \in T} |\alpha|$. For an infinite set $X$, let $Inc(X)$ be the set of all increasing sequences of elements of $X$, i.e., all $\sigma \in \omega^{<\omega}$ with $\sigma(n) \in X$ for all $n < |\sigma|$ and $\sigma(n) < \sigma(n')$ for all $n < n' < |\sigma|$.
So if $T$ is a subtree of $Inc(X)$, then any path through $T$ has infinite range. Additionally, if such a $T$ has bounded width, then it is finite as a set of strings if and only $\ran(T)$ is finite, if and only if $|T|$ is a finite number, if and only if $T$ is well-founded.

Throughout, if $X$ and $Y$ are sets, we will write $X < Y$ to mean that $X$ is finite and $\max X < \min Y$. If $A = \{x_0,\ldots,x_{n-1}\}$, we will also sometimes write $x_0,\ldots,x_{n-1} < Y$. If $T$ and $U$ are subtrees of $Inc(\omega)$, we will write $T < U$ to mean that $\ran(T) < \ran(U)$. Note that this implies that $\ran(T)$ is finite, hence in particular that $T$ is well-founded.

\begin{definition}\label{D:phisequences}
Let $\varphi$ and $\psi$ be properties of finite sets.
\begin{enumerate}
\item A \emph{$\varphi$-tree} is a finite subtree $T$ of $Inc(\omega)$ of bounded width such that if $\alpha \in T$ is a terminal node, then $\varphi(F)$ holds for some finite set $F \subseteq \ran(\alpha)$.
\item A \emph{$\varphi$-sequence} is a (finite or infinite) sequence $T_0 < T_1 < \cdots$ of $\varphi$-trees.
\item The \emph{$\psi$-generated subtree} of a $\varphi$-sequence $T_0 < T_1 < \cdots$ is the tree of all $\alpha \in \omega^{<\omega}$ such that $\alpha(n) \in \ran(T_n)$ for all $n < |\alpha|$, and $\neg \psi(F)$ holds for all finite sets $F \subseteq \ran(\alpha \res |\alpha|-1)$.
\end{enumerate}
\end{definition}

\noindent We highlight a few basic but important observations about the above definition.
\begin{itemize}
\item The $\psi$-generated subtree is a finitely-branching subtree of $Inc(\omega)$.
\item If the $\psi$-generated subtree of an infinite $\varphi$-sequence is finite, it is a $\psi$-tree.
\item If the $\psi$-generated subtree $U$ of some $\varphi$-sequence $T_0 < T_1 < \cdots$ is a $\psi$-tree, then $U$ is also the $\psi$-generated subtree of $T_0 < \cdots < T_{|U|-1}$.
\item If $\varphi$ is $\Sigma^0_1$ and there exists an infinite $\varphi$-sequence, then there also exists a computable infinite $\varphi$-sequence.
\item If $\psi$ is $\Sigma^0_1$, then the $\psi$-generated subtree of any computable infinite $\varphi$-sequence is computable and computably-bounded.
\end{itemize}

\noindent We add to this list the following two general results.

\begin{lemma}\label{L:finseq}
If there exists no infinite $\varphi$-sequence, then there is a number $z$ such that $\neg \varphi(F)$ holds for all finite sets $F > z$.
\end{lemma}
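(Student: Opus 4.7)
I would prove the contrapositive: assume that for every $z \in \omega$ there exists a finite set $F$ with $\min F > z$ and $\varphi(F)$, and construct an infinite $\varphi$-sequence. This will contradict the hypothesis, yielding the desired $z$.

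The key observation is that any single finite set $F$ with $\varphi(F)$ already gives rise to a $\varphi$-tree of the most trivial possible shape: let $\alpha \in Inc(\omega)$ be the strictly increasing enumeration of $F$, and let $T = \{ \alpha \res k : k \leq |\alpha|\}$, a single branch. Then $T$ is a finite subtree of $Inc(\omega)$ of width $1$, its unique terminal node is $\alpha$, and $\varphi(F)$ holds for $F = \ran(\alpha) \subseteq \ran(\alpha)$, so $T$ is a $\varphi$-tree with $\ran(T) = F$.

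Using this, I would iteratively construct $F_0 < F_1 < \cdots$ with $\varphi(F_i)$ for every $i$: applying the hypothesis with $z = 0$ (or any starting value) yields some $F_0$; given $F_0 < \cdots < F_n$, applying the hypothesis with $z = \max F_n$ yields $F_{n+1}$ with $\min F_{n+1} > \max F_n$ and $\varphi(F_{n+1})$. Let $T_i$ be the single-branch $\varphi$-tree constructed from $F_i$ as above. Since $\ran(T_i) = F_i$ and $\max F_i < \min F_{i+1}$, we have $T_0 < T_1 < \cdots$, an infinite $\varphi$-sequence, contradicting the hypothesis.

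The main ``obstacle'' is really just a sanity check: confirming that one does not need any nontrivial branching, combinatorial closure, or tree structure to produce a $\varphi$-tree from a witness to $\varphi$. Once that is recognized, the lemma is a direct unpacking of Definition~\ref{D:phisequences}, and the only content is the straightforward greedy construction of the $F_i$.
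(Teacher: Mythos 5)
Your proposal is correct and matches the paper's proof essentially verbatim: both argue by contrapositive, greedily extract finite sets $F_0 < F_1 < \cdots$ each satisfying $\varphi$, and turn each $F_n$ into the single-branch $\varphi$-tree of initial segments of its increasing enumeration (the paper's ``principal function''). No differences worth noting.
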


\begin{proof}
If there is no such $z$, we can define a sequence of finite sets $F_0 < F_1 < \cdots$ such that $\varphi(F_n)$ holds for all $n$. For each $n$, let $T_n$ be the tree of all initial segments of the principal function of $F_n$. Then $T_0 < T_1 < \cdots$ is an infinite $\varphi$-sequence.
\end{proof}

\begin{lemma}\label{L:twoseq}
If there exists an infinite $\varphi$-sequence, and the $\psi$-generated subtree of every infinite $\varphi$-sequence is finite, then there exists an infinite $\psi$-sequence.
\end{lemma}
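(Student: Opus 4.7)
The plan is to build an infinite $\psi$-sequence $U_0 < U_1 < \cdots$ inductively, obtaining each $U_n$ by applying the hypothesis to a suitable tail of a fixed infinite $\varphi$-sequence. The key observation driving the argument is that any tail $T_k < T_{k+1} < \cdots$ of an infinite $\varphi$-sequence is itself an infinite $\varphi$-sequence, so the hypothesis on $\psi$-generated subtrees applies to each of them in turn.

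First I fix an infinite $\varphi$-sequence $T_0 < T_1 < \cdots$, guaranteed by the first part of the hypothesis, and set $k_0 = 0$. Given $k_n$, the tail $T_{k_n} < T_{k_n+1} < \cdots$ is an infinite $\varphi$-sequence, so by the second part of the hypothesis its $\psi$-generated subtree is finite; by the second bulleted observation preceding the lemma, this finite subtree is a $\psi$-tree, which I call $U_n$. By the third bulleted observation, $U_n$ is already determined by the finite initial segment $T_{k_n} < \cdots < T_{k_n + |U_n| - 1}$ of the tail, so in particular $\ran(U_n) \subseteq \ran(T_{k_n}) \cup \cdots \cup \ran(T_{k_n + |U_n| - 1})$. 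I then set $k_{n+1} = k_n + \max(|U_n|, 1)$, which ensures $k_{n+1} > k_n$ and also that $T_{k_{n+1}} > T_i$ for every $i < k_{n+1}$; hence $\ran(U_{n+1}) \subseteq \bigcup_{i \geq k_{n+1}} \ran(T_i)$ lies strictly above $\ran(U_n)$, i.e., $U_n < U_{n+1}$. Iterating yields the desired infinite $\psi$-sequence.

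There is no serious obstacle: the whole argument is essentially an unpacking of the bulleted observations together with the remark that tails of infinite $\varphi$-sequences are again infinite $\varphi$-sequences, so the hypothesis can be invoked repeatedly to carve $\psi$-trees out of successively higher disjoint windows of $\omega$. The only technical wrinkle is the degenerate possibility that $|U_n| = 0$, i.e., $U_n = \{\langle\rangle\}$ (which would force $\psi(\emptyset)$ to hold), which is why I take $k_{n+1} = k_n + \max(|U_n|, 1)$ rather than $k_n + |U_n|$, so that the tail used to build $U_{n+1}$ is genuinely later than the one used for $U_n$.
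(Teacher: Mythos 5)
Your proof is correct and follows essentially the same inductive strategy as the paper: build each $U_n$ as the $\psi$-generated subtree of a suitable tail of the fixed $\varphi$-sequence, chosen far enough along to lie above all previously constructed $\psi$-trees. Your explicit index bookkeeping $k_{n+1} = k_n + \max(|U_n|,1)$ mirrors the paper's remark that the minimal choice at each step is $s = \sum_{m<n}|U_m|$, with the extra $\max(\cdot,1)$ merely guarding against the degenerate case $|U_n|=0$, which the paper does not bother to single out.
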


\begin{proof}
Let $T_0 < T_1 < \cdots$ be any $\varphi$-sequence, and assume that for some $n \in \omega$, we have defined $\psi$-trees $U_m$ for all $m < n$. Since all the $U_m$ are finite, there is an $s$ such that $U_m < T_s$ for all $m < n$. Now $T_s < T_{s+1} < \cdots$ is a $\varphi$-sequence, so if we let $U_n$ be its $\psi$-generated subtree, then $U_n$ is finite and hence a $\psi$-tree. By choice of $s$, we have $U_m < U_n$ for all $m < n$, so by induction, $U_0 < U_1 < \cdots$ is a $\psi$-sequence.
\end{proof}

It is easy to see that if we chose $s$ at each step of the above construction to be minimal, we would end up making $s = \sum_{m < n} |T_{j+1,m}|$.


\begin{definition}\label{D:forest}
Fix $k \geq 1$, and let $\varphi_0,\ldots,\varphi_{k-1}$ be properties of finite sets. A \emph{$\seq{\varphi_0,\ldots,\varphi_{k-1}}$-forest} is a collection of sequences of trees
\[
\{T_{0,j} < \cdots < T_{j,s_j} : j < k\}
\]
with the following properties. For each $j < k$, the sequence $T_{j,0} < \cdots < T_{j,s_j}$ is a $\varphi_j$-sequence, and for each $j < k-1$ and each $n < s_{j+1}$, the tree $T_{j+1,n}$ is the $\psi_{j+1}$-generated subtree of
\[
T_{j,s} < \cdots <T_{j,s + |T_{j+1,n}|-1},
\]
where $s = \sum_{m < n} |T_{j+1,m}|$.
\end{definition}

\noindent So for instance, in a $\seq{\varphi_0,\varphi_1,\varphi_2}$-forest $\{T_{0,j} < \cdots < T_{j,s_j} : j < 3\}$, we have:
\begin{itemize}
	\item the tree $T_{1,0}$ is the $\varphi_1$-generated subtree of $T_{0,0} < \cdots < T_{0,|T_{1,0}|-1}$;
	\item the tree $T_{1,1}$ is the $\varphi_1$-generated subtree of $T_{0,|T_{1,0}|} < \cdots < T_{0,|T_{1,0}| + |T_{1,1}| - 1}$;
	\item the tree $T_{2,0}$ is the $\varphi_2$-generated subtree of $T_{1,0} < \cdots < T_{1,|T_{2,0}|-1}$;
\end{itemize}
and so on.

\begin{lemma}\label{L:exforest}
Fix $k \geq 1$, let $\varphi_0,\ldots,\varphi_{k-1}$ be properties of finite sets. Suppose there exists an infinite $\varphi_0$-sequence, and that for each $j < k - 1$, every infinite $\varphi_j$-sequence has finite $\varphi_{j+1}$-generated subtree. Then there exists a $\seq{\varphi_0,\ldots,\varphi_{k-1}}$-forest, and moreover, if $\varphi_0,\ldots,\varphi_{k-1}$ are $\Sigma^0_1$, then it can be found uniformly computably.
\end{lemma}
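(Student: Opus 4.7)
The plan is to construct the forest one level at a time, iterating the slicing construction from Lemma \ref{L:twoseq} exactly $k - 1$ times, and to record at each step the length $|T_{j+1,n}|$ needed to reconstitute the coherence condition of Definition \ref{D:forest}.

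Start by fixing an infinite $\varphi_0$-sequence $T_{0,0} < T_{0,1} < \cdots$, which exists by the first hypothesis (and can be taken computable when $\varphi_0$ is $\Sigma^0_1$, by the fourth bullet after Definition \ref{D:phisequences}). Suppose inductively that an infinite $\varphi_j$-sequence at level $j$ has been built, for some $j < k - 1$. Set $s_0 = 0$ and, having defined $T_{j+1,m}$ for all $m < n$, put $s_n = \sum_{m < n} |T_{j+1,m}|$ and let $T_{j+1,n}$ be the $\varphi_{j+1}$-generated subtree of the tail $T_{j,s_n} < T_{j,s_n+1} < \cdots$. This tail is itself an infinite $\varphi_j$-sequence, so by the second hypothesis its $\varphi_{j+1}$-generated subtree is finite, and hence is a $\varphi_{j+1}$-tree by the second bullet after Definition \ref{D:phisequences}. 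The third bullet identifies this tree with the $\varphi_{j+1}$-generated subtree of the finite initial segment $T_{j,s_n} < \cdots < T_{j,s_n + |T_{j+1,n}| - 1}$, exactly as Definition \ref{D:forest} demands. The inequality $T_{j+1,n-1} < T_{j+1,n}$ follows from
\[
\ran(T_{j+1,n-1}) \subseteq \bigcup_{s_{n-1} \leq m < s_n} \ran(T_{j,m}) < \ran(T_{j,s_n}) \subseteq \ran(T_{j+1,n}).
\]
Iterating through $j = 0, 1, \ldots, k - 2$ then produces a $\seq{\varphi_0, \ldots, \varphi_{k-1}}$-forest.

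For the uniformity clause, suppose all $\varphi_j$ are $\Sigma^0_1$. The level-$0$ sequence is computable as just noted, and then the fifth bullet after Definition \ref{D:phisequences} ensures that the $\varphi_{j+1}$-generated subtree of any computable tail at level $j$ is a computable, computably bounded tree which is finite by hypothesis; we can therefore locate it, and read off its size $|T_{j+1,n}|$, by searching the computably bounded tree for a depth at which all branches have died. Threading these searches through the induction on $j$ yields the full forest by a single uniform computable procedure. The only point that requires care is the bookkeeping of the indices $s_n$ together with the verification that each tail used is again an infinite $\varphi_j$-sequence (and hence falls under the second hypothesis); the real combinatorial content is entirely carried by Lemma \ref{L:twoseq}, so no further diagonalization is needed.
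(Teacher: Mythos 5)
Your proof takes essentially the same approach as the paper's: it invokes Lemma~\ref{L:twoseq} iteratively, choosing tails indexed by $s_n = \sum_{m<n}|T_{j+1,m}|$ so that the coherence condition of Definition~\ref{D:forest} is satisfied automatically, and then appeals to the bullet-point observations to handle finiteness and effectivity. The one thing you leave implicit that the paper makes explicit is the final truncation: you build infinite $\varphi_j$-sequences at every level, but a $\seq{\varphi_0,\ldots,\varphi_{k-1}}$-forest is a \emph{finite} collection of finite sequences, so one must then specify the cutoffs -- the paper sets $s_{k-1} = 0$ and $s_j = (\sum_{n \leq s_{j+1}} |T_{j+1,n}|) - 1$ for $j < k-1$ -- to extract the finite object. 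This is a trivial addendum, but worth stating, since it is precisely what makes the output a forest rather than a compatible family of infinite sequences.
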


\begin{proof}
By Lemma~\ref{L:twoseq}, there exists an infinite $\varphi_j$-sequence $T_{j,0} < T_{j,1} < \cdots$ for each $j < k$. By the proof of that lemma and the subsequent remark, we may choose these sequences so that for each $j < k-1$ and every $n$, the tree $T_{j+1,n}$ is the $\varphi_{j+1}$-generated subtree of the sequence $T_{j,s} < T_{j,s+1} < \cdots$, where $s = \sum_{m < n} |T_{j+1,m}|$. Let $s_{k-1} = 0$, and for $j < k-1$, let $s_j = (\sum_{n \leq s_{j+1}} |T_{j+1,n}|) - 1$. Then $\{T_{0,j} < \cdots < T_{j,s_j} : j < k\}$ is a $\seq{\varphi_0,\ldots,\varphi_{k-1}}$-forest.
\end{proof}

The next result is the main motivation behind Definition~\ref{D:phisequences}, and appears in less general form in~\cite[Lemma 2.3]{Dzhafarov-2015}. It is an elaboration on the combinatorial core of Seetapun's argument~\cite[Lemma 2.14]{SS-1995}, which is essentially the special case when $k = 2$ below.

\begin{lemma}\label{L:combcore}
Fix $k \geq 1$, let $\varphi_0,\ldots,\varphi_{k-1}$ be properties of finite sets, and suppose $\{T_{0,j} < \cdots < T_{j,s_j} : j < k\}$ is a $\seq{\varphi_0,\ldots,\varphi_{k-1}}$-forest. Given $l : \omega \to k$, there is a $j < k$ and a terminal $\alpha$ in some $T_{j,n}$ such that $l(x) = j$ for all $x \in \ran(\alpha)$.
\end{lemma}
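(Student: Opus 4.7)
My plan is to induct on $k$. The base case $k = 1$ is immediate: pick any terminal $\alpha$ of $T_{0,0}$; since $l$ takes only the value $0$, we have $l(x) = 0$ for all $x \in \ran(\alpha)$, witnessing the conclusion with $j = 0$.

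For the inductive step with $k \geq 2$, I would focus on the first top-level tree $T_{k-1,0}$ and consider the subtree
\[
T^{\ast} = \{\alpha \in T_{k-1,0} : \ran(\alpha) \subseteq l^{-1}(k-1)\},
\]
which is nonempty since $\emptyset \in T^{\ast}$. If $T^{\ast}$ contains a terminal of $T_{k-1,0}$, we are done with $j = k-1$. Otherwise, any leaf $\alpha^{\ast}$ of $T^{\ast}$ is a non-terminal of $T_{k-1,0}$, and the fact that no extension of $\alpha^{\ast}$ lies in $T^{\ast}$ translates, via the definition of the $\varphi_{k-1}$-generated subtree, to $\ran(T_{k-2, |\alpha^{\ast}|}) \cap l^{-1}(k-1) = \emptyset$.

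Setting $m^{\ast} = |\alpha^{\ast}|$, I would assemble a sub-forest $\mathcal{F}^{m^{\ast}}$ whose unique top-level tree is $T_{k-2, m^{\ast}}$, and whose trees at each lower level $j$ are the consecutive block originally used to generate the chosen level-$(j+1)$ trees; this is easily seen to be a $\seq{\varphi_0, \ldots, \varphi_{k-2}}$-forest after re-indexing. I would then apply the inductive hypothesis to $\mathcal{F}^{m^{\ast}}$ with the coloring $l'(x) = \min(l(x), k-2)$, which identifies the colors $k-1$ and $k-2$. This returns some $j' < k-1$ and a terminal $\beta$ of a tree $T_{j', n}$ in $\mathcal{F}^{m^{\ast}}$ with $l'(x) = j'$ throughout $\ran(\beta)$. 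If $j' < k-2$, then $l(x) = l'(x) = j'$ without ambiguity and we are done. If $j' = k-2$, then $T_{j', n}$ must be $T_{k-2, m^{\ast}}$, the unique level-$(k-2)$ tree of $\mathcal{F}^{m^{\ast}}$, so $\ran(\beta) \subseteq \ran(T_{k-2, m^{\ast}}) \subseteq l^{-1}(\{0, \ldots, k-2\})$, and combining $l'(x) = k-2$ with $l(x) \neq k-1$ gives $l(x) = k-2$.

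The main obstacle, I expect, is structuring the reduction so that the inductive hypothesis returns a terminal whose colors pull back unambiguously under $l$. Simply restricting $l$ to the universe of the sub-forest is inadequate, because trees at levels strictly below $k-2$ in $\mathcal{F}^{m^{\ast}}$ can still contain elements of $l^{-1}(k-1)$, so a monochromatic terminal produced by the IH need not avoid color $k-1$. Merging only the top two colors via $l'$ sidesteps this, since the pullback ambiguity arises only when $j' = k-2$, and that case forces the witnessing tree to be $T_{k-2, m^{\ast}}$ itself, where by construction $l^{-1}(k-1)$ has been excluded.
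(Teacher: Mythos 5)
Your proof is correct, but it takes a genuinely different route from the paper's. The paper works ``bottom-up'': assuming the conclusion fails for every $j < k-1$, it shows by induction on $j$ that every $T_{j,n}$ contains a terminal node with some element colored above $j$, by picking one such element from each tree in the block generating a level-$(j+1)$ tree and taking (the longest in-tree prefix of) the resulting increasing sequence as the new witness; at the top level this forces a monochromatic color-$(k-1)$ terminal. Your proof works ``top-down'' by induction on $k$: either the subtree $T^*$ of $T_{k-1,0}$ with range in $l^{-1}(k-1)$ contains a terminal of $T_{k-1,0}$, or a leaf of $T^*$ locates a level-$(k-2)$ tree $T_{k-2,m^*}$ whose range avoids color $k-1$, from which you extract the $\seq{\varphi_0,\ldots,\varphi_{k-2}}$-sub-forest below it, merge colors $k-1$ and $k-2$, and invoke the inductive hypothesis. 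The paper's argument stays entirely inside the original forest and needs no sub-forest machinery; yours makes the ``drop one color'' recursion explicit, and your observation about why one must merge colors rather than simply restrict $l$ is a genuine and necessary insight (the pullback ambiguity is confined to the case $j'=k-2$, and precisely there the witnessing tree $T_{k-2,m^*}$ has color $k-1$ excluded by construction). Both proofs quietly rely on the same structural fact about the generated subtree: whether a $1$-extension of a node lies in the tree depends only on that node's range, not on the extending entry, so a non-terminal node has all of its eligible $1$-extensions in the tree. You use this to get $\ran(T_{k-2,m^*}) \cap l^{-1}(k-1) = \emptyset$; the paper uses it (implicitly) to conclude that the longest in-tree prefix of its constructed sequence is a terminal node of $T_{j+1,n}$.
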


\begin{proof}
Suppose the conclusion of the lemma fails for all $j < k-1$. We claim that for each such $j$, every $T_{j,n}$ contains some terminal $\alpha$ having an $x$ in its range with $l(x) > j$. This is true of $j = 0$ by assumption, so assume it for an arbitrary $j < k-1$. Fix any $n \leq s_{j+1}$, and let $s = \sum_{m < n} |T_{j+1,m}|$ and $t = s + |T_{j+1,n}|-1$, so that $T_{j+1,n}$ is the $\varphi_{j+1}$-generated subtree of $T_{j,s} < \cdots < T_{j,t}$. For each $m$ with $s \leq m \leq t$, choose an $x_m$ in some $\alpha \in T_{j,m}$ with $l(x_m) > j$. Then $\alpha = \seq{x_s,\ldots,x_t}$ is a terminal node in $T_{j+1,n}$, and we have $l(x) \geq j+1$ for all $x \in \ran(\alpha)$. If $j+1 < k-1$, we must have that $l(x) > j+1$ for some $x \in \ran(\alpha)$, by assumption, which completes the induction. If, on the other hand, $j+1 = k-1$, then we have $l(x) = k-1$ for all $x \in \ran(\alpha)$. Thus, the conclusion of the lemma holds for $k-1$.
\end{proof}

We close by noting that Definitions~\ref{D:phisequences} and~\ref{D:forest} can both be formulated for restrictions to any infinite set $X$, by replacing $Inc(\omega)$ with $Inc(X)$ throughout. For instance, we define a \emph{$\varphi$-forest inside $X$} to be a finite subtree $T$ of $Inc(X)$ of bounded width such that if $\alpha \in T$ is terminal then $\varphi(F)$ holds for some finite $F \subseteq \ran(\alpha)$. All of the above lemmas extend in the obvious way to this more general setting.

\section{Comparing $\SRT^2$ and $\D^2$}\label{S:stable}

In this section, we prove that $\SRT^2_2$ is not uniformly or strongly computably reducible to $\D^2_{<\infty}$. The first of these results is comparatively straightforward, and thus serves as a good illustration of how the setup of the preceding section will be applied, both in the proof of the second result, as well as in the considerably more complicated arguments of Section~\ref{S:COH_uniform}.

We begin with the following definition.

\begin{definition}
	Let $\mathbb{P}$ be the following notion of forcing. A \emph{condition} is a triple $p = \seq{n^p,c^p,\ell^p}$, where $n^p$ is a number, $c^p$ is a function $[n^p]^2 \to 2$, and $\ell^p$ is a function $n^p \to 2 \times \omega$ such that if $\ell^p(x) = \seq{v,u}$ for some $x < n^p$ then $c^p(x,y) = v$ for all $y$ with $u \leq y < n^p$. A condition $q$ \emph{extends} $p$ if $n^q \geq n^p$, $c^q \supseteq c^p$, and $\ell^q \supseteq \ell^p$.
\end{definition}

\noindent Thus, $c^p$ is a finite approximation to a coloring of pairs, and we think of $\ell^p$ as a commitment about what color the numbers smaller than $n^p$ should limit to under this coloring, and by what point their colors should stabilize. It is clear that if $G$ is a sufficiently generic filter on $\mathbb{P}$, then $\bigcup_{p \in G} c^p$ is a stable coloring of pairs. We say a stable coloring $c : [\omega]^2 \to 2$ \emph{extends} a condition $p$, and write $c \succeq p$, if $c$ extends $c^p$ as a function and respects $\ell^p$, in the sense that if $\ell^p(x) = \seq{v,u}$ then $c(x,y) = v$ for all $y \geq u$.

Our results below, while driven by an interest in $\ured$ and $\scred$, actually prove considerably more. Namely, each constructs an instance $c$ of $\SRT^2_2$ such that \emph{every} instance of $\D^2_{<\infty}$, computable from $c$ or not, has a solution not computing (uniformly or strongly, respectively) any solution to $c$. Such stronger versions of $\ured$ and $\scred$ were first asked about by Montalb\'{a}n (unpublished), in connection with the results of~\cite{Dzhafarov-2015}, and have subsequently also emerged in~\cite{HJ-TA} and~\cite{Patey-TA}. The ``forward'' reductions in the definitions of these reductions, in other words, seem less essential in many cases.

\begin{theorem}\label{T:nured_no}
	Let $\Psi$ be a Turing functional. There is a computable coloring $c : [\omega]^2 \to 2$ such that for every $k \geq 2$, every stable coloring $d : [\omega]^2 \to k$ has an infinite limit-homogeneous set $L$ with the property that $\Psi^L$ is not equal to any infinite homogeneous set for $c$.
\end{theorem}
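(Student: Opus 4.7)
The plan is to build $c$ computably via a finite-extension construction over the forcing notion $\mathbb{P}$, and, given a stable $d$, extract the desired $L$ using the combinatorial core of Section~\ref{S:gencore}. We enumerate a descending chain $p_0 \geq p_1 \geq \cdots$ of conditions, ensuring $n^{p_s} \to \infty$ and that each $x \in \omega$ eventually receives a commitment via some $\ell^{p_s}$, so that $c := \bigcup_s c^{p_s}$ is a computable stable $2$-coloring. Simultaneously we meet diagonalization requirements $R_{\sigma, i}$ indexed by pairs with $\sigma \in Inc(\omega)$ and $i < 2$: at an appropriate stage, if there is an extension $q$ of the current condition and a $\sigma' \supseteq \sigma$ (with $\ran(\sigma') \setminus \ran(\sigma)$ consisting of numbers beyond $n^{p_s}$) such that $\Psi^{\sigma'}$ outputs a pair $(u,v)$ with $c^q(u,v) = 1 - i$, we extend to such a $q$. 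This is implementable computably because $\mathbb{P}$ leaves fresh elements uncommitted, and so we are free to set their colors on the pairs that $\Psi^{\sigma'}$ produces.

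For verification, let $d : [\omega]^2 \to k$ be stable with limit-partition $D_j = \{x : \lim_y d(x,y) = j\}$; we restrict attention to $j < k$ with $D_j$ infinite. Define $\varphi(F)$ on finite sets to hold iff $\Psi^{\sigma_F}$ (where $\sigma_F$ is the increasing enumeration of $F$) outputs two pairs of opposite $c$-colors. If $\varphi(F)$ holds and $L$ extends $F$ as an initial segment of its increasing enumeration with $L \setminus F$ above the uses of the witnessing computations, then $\Psi^L$ inherits both pairs and is not $c$-homogeneous. Suppose for contradiction that no limit-homogeneous $L$ for $d$ has $\Psi^L$ failing to be infinite $c$-homogeneous. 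We apply Lemma~\ref{L:combcore} with $\varphi_0 = \cdots = \varphi_{k-1} = \varphi$. The required $\seq{\varphi, \ldots, \varphi}$-forest is produced by Lemma~\ref{L:exforest} provided that (a) an infinite $\varphi$-sequence exists and (b) every infinite $\varphi$-sequence has finite $\varphi$-generated subtree. Property (a) is immediate because the diagonalization is met at infinitely many stages, each yielding a fresh disjoint finite $F$ with $\varphi(F)$. Property (b) is the crux: an infinite branch through a $\varphi$-generated subtree would produce an infinite $\alpha$ whose every prefix $\sigma$ satisfies $\neg\varphi$, forcing $\Psi^\sigma$ to be $c$-monochromatic in some fixed color $i$; this contradicts the density of the requirement $R_{\sigma, i}$ along this branch.

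Lemma~\ref{L:combcore} applied to the function $l := d^\ast$, where $d^\ast(x) = j$ iff $x \in D_j$, then yields $j < k$ and a terminal $\alpha$ in some $T_{j, n}$ of the forest with $\ran(\alpha) \subseteq D_j$ and $\varphi(F)$ for some $F \subseteq \ran(\alpha)$. Since $D_j$ is infinite, we extend $\alpha$ to the enumeration of an infinite $L \subseteq D_j$ with the appended elements all above the uses of the witnessing computations; then $L$ is limit-homogeneous for $d$ with color $j$ and $\Psi^L$ is not $c$-homogeneous, a contradiction. The main obstacle is the verification of property (b): one must rigorously argue that the diagonalization can always succeed (that we are never stuck), which amounts to showing that for every condition $p$ and every pair $(\sigma, i)$ the relevant extension exists. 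This is where the flexibility of $\mathbb{P}$ — specifically, the ability to commit previously uncommitted elements to either color — plays an essential role, and where the combinatorial generalization of Seetapun's argument developed in Section~\ref{S:gencore} is brought to bear.
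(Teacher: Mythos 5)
Your high-level framework is the right one (forcing over $\mathbb{P}$, Lemmas~\ref{L:exforest} and~\ref{L:combcore}, and a forest argument to land inside a limit class $D_j$), but several steps deviate from the paper in ways that create genuine gaps rather than an alternative route.

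The most serious gap is in the verification of your property (b). You claim that an infinite branch $\alpha$ with $\neg\varphi$ along all prefixes is ruled out because of ``the density of the requirement $R_{\sigma,i}$ along this branch.'' But $R_{\sigma,i}$ is met by finding \emph{some} $\sigma' \supseteq \sigma$; nothing forces $\sigma'$ to lie on $\alpha$, so meeting $R_{\sigma,i}$ for cofinally many $\sigma \prec \alpha$ says nothing about $\Psi^{\alpha}$. In fact the paper does not rule this case out at all. Its $\varphi(F)$ is simply ``$\Psi^F$ converges to $1$ on two numbers $\geq n^{p_k}$,'' so that an infinite branch $P$ with $\neg\varphi$ throughout means $\Psi^Y$ is \emph{finite} for every $Y \subseteq \ran(P)$; this case is then eliminated up front by assuming (without loss of generality) that no such infinite set $X$ exists, otherwise any limit-homogeneous $L \subseteq X$ works trivially. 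Your $\varphi$ (``$\Psi^F$ outputs two pairs of opposite $c$-colors'') is strictly stronger, and its negation along a branch only yields $c$-monochromaticity, which does not contradict your hypothesis that $\Psi^L$ is always an infinite homogeneous set.

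A second gap is that your diagonalization never actually establishes your $\varphi$. Meeting $R_{\sigma,i}$ yields one pair of color $1{-}i$, which only prevents $\Psi^L$ from being homogeneous with color $i$; to get both colors you would need both $R_{\sigma,0}$ and $R_{\sigma,1}$ to act on the \emph{same} finite set, and nothing in your construction guarantees this. The paper sidesteps this entirely with one trick you have not reproduced: after finding a forest, it sets $c^{p_{k+1}}(x,y)=0$ while simultaneously committing $\ell^{p_{k+1}}(x)=\seq{1,u}$, so that $c(x_0,x_1)=0$ but $\lim_y c(x_0,y)=1$. This poisons the pair $\{x_0,x_1\}$ against membership in \emph{any} infinite homogeneous set, of either color, using a single stage's work. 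Because of this trick, the paper's $\varphi$ only needs $\Psi^F$ to produce two new elements; it does not need to know anything about $c$-colors, which removes the circularity in your proposal where $\varphi$ refers to the as-yet-unbuilt $c$. Finally, your claim that property (a) is ``immediate because the diagonalization is met at infinitely many stages'' is not justified: whether the diagonalization can be met at all is exactly the content of the dichotomy the paper draws between Cases 1/2 and Case 3, and you have not argued either side.
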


\begin{proof}
	We may assume $\Psi$ is $\{0,1\}$-valued, so that if it is total, it defines a set. Also, if there is an infinite set $X$ such that $\Psi^X$ does not define an infinite set for any $Y \subseteq X$, we may let $c$ be arbitrary, and for each $d : [\omega]^2 \to k$ simply choose an infinite limit-homogeneous set $L \subseteq X$. Thus, assume this is not the case. We build a computable sequence of conditions $p_0 \geq p_1 \geq \cdots$, and let $c = \bigcup_{k \in \omega} c^{p_k}$.
	
	Let $p_0 = \seq{0,\emptyset,\emptyset}$, and assume inductively that we have defined $p_k$ for some $k \in \omega$. Let $\varphi(F)$ be the formula asserting that there exist two numbers $x_1 > x_0 \geq n^{p_k}$ with $\Psi^{F}(x_0) \downarrow = \Psi^{F}(x_1) \downarrow = 1$. We consider the following three cases.
	
	Case 1: there exists no infinite $\varphi$-sequence. By Lemma~\ref{L:finseq}, there is a number $z$ such that $\neg \varphi(F)$ holds for all finite sets $F > z$. In this case, if $Y$ is any subset of $\{x \in \omega : x > z\}$ and $\Psi^Y$ is total, then the set it defines contains at most one number $x \geq n^{p_k}$, and so is finite. This contradicts our assumption above about $\Psi$.
	
	Case 2: there exists an infinite $\varphi$-sequence with infinite $\varphi$-generated subtree. Let $P$ be any path through this tree, recalling that it has infinite range. By definition, $\neg \varphi(F)$ holds for every finite set $F \subseteq \ran(P)$. Hence, if $Y$ is any subset of $\ran(P)$ and $\Psi^Y$ is total, the set it defines is finite, which is again a contradiction.
	
	Case 3: otherwise. Define $\varphi_0 = \cdots = \varphi_{k-1} = \varphi$, and observe that by the failure of Cases 1 and 2, we are exactly in the hypotheses of Lemma~\ref{L:exforest}. Since $\varphi$ is $\Sigma^0_1$, we may consequently effectively search for, and find, a $\seq{\varphi_0,\ldots,\varphi_{k-1}}$-forest $\seq{T_{j,0} < \cdots < T_{j,s_j} : j < k }$. Since each $T_{j,i}$ is a $\varphi_j$-tree, for every terminal $\alpha \in T_{j,i}$ there is a finite set $F$ and two numbers $x_1 > x_0 \geq n^{p_k}$ with $\Psi^{F}(x_0) \downarrow = \Psi^{F}(x_1) \downarrow = 1$. Let $u$ be the maximum of all these computations, across all $\alpha$ and $F$, which by usual conventions also means that $u$ is bigger than all the numbers $x_0$ and $x_1$. Let $p_{k+1}$ be the extension of $p_k$ with $n^{p_{k+1}} = u$, and with $c^{p_{k+1}}(x,y) = 0$ and $\ell^{p_{k+1}}(x) = \seq{1,u}$ for all $x,y$ with $n^{p_k} \leq x < y < u$.
	
	Now let $d : [\omega]^2 \to k$ be given, and by passing to a smaller number of colors if necessary, assume that for each $j < k$, there are infinitely many $x$ with $\lim_y d(x,y) = j$. Let $\seq{T_{j,0} < \cdots < T_{j,s_j} : j < k }$ be the $\seq{\varphi_0,\ldots,\varphi_{k-1}}$-forest from the construction of $p_{k+1}$. By Lemma~\ref{L:combcore}, there is a $j < k$, an $i \leq s_j$, and a terminal $\alpha \in T_{j,i}$, such that $\lim_y d(x,y) = j$ for all $x \in \ran(\alpha)$. Fix $F \subseteq \ran(\alpha)$ and $x_1 > x_0 \geq n^{p_k}$ with $\Psi^F(x_0) \downarrow = \Psi^F(x_1) \downarrow = 1$. Since infinitely many numbers limit to $j$ under $d$, there is an infinite limit-homogeneous set $L$ for $d$ extending $F$ above the use of these computations. In particular, $\Psi^L$ agrees with $\Psi^F$ on $x_0$ and $x_1$, so if $\Psi^L$ is total the set it defines contains $x_0$ and $x_1$. But by construction, $c(x_0,x_1) = 0$ and $\lim_y c(x_0) = 1$, so this set cannot be homogeneous for $c$.
\end{proof}

\begin{corollary}\label{C:SRT22nuredD2}
	$\SRT^2_2 \nured \D^2_{<\infty}$.
\end{corollary}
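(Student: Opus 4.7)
The plan is to derive the corollary from Theorem~\ref{T:nured_no} via the recursion theorem. A direct application of the theorem yields only $\SRT^2_2 \nsured \D^2_{<\infty}$, since the theorem controls $\Psi^L$, whereas a uniform reduction produces a solution of the form $\Psi^{c \oplus L}$ in which $c$ also appears as an oracle; we resolve this extra dependence by a fixed-point argument.

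Suppose toward a contradiction that $(\Phi,\Psi)$ witnesses $\SRT^2_2 \ured \D^2_{<\infty}$. For each $e \in \omega$, let $A_e$ denote the partial computable function with index $e$, and define a Turing functional $\Psi_e$ by $\Psi_e^L = \Psi^{A_e \oplus L}$; this definition is uniform in $e$. Inspection of the proof of Theorem~\ref{T:nured_no} shows that its construction of $c$ is uniformly effective in an index for the input functional, because the formula $\varphi$ used there is $\Sigma^0_1$ in $\Psi$ and the search for a $\seq{\varphi_0,\ldots,\varphi_{k-1}}$-forest is computable. Hence there is a computable function $f$ such that $A_{f(e)}$ is the coloring the theorem produces when applied to $\Psi_e$. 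By the recursion theorem, there is $e^*$ with $A_{e^*} = A_{f(e^*)}$; setting $c = A_{e^*}$ gives a computable coloring satisfying $\Psi_{e^*}^L = \Psi^{c \oplus L}$ for every $L$.

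Since $(\Phi,\Psi)$ is a uniform reduction, $d = \Phi^c$ is a stable $k$-coloring for some $k$, and hence a valid instance of $\D^2_{<\infty}$. Applying Theorem~\ref{T:nured_no} to $\Psi_{e^*}$ and $c$ then produces an infinite limit-homogeneous set $L$ for $d$ such that $\Psi^{c \oplus L}$ is not any infinite homogeneous set for $c$, directly contradicting the assumed uniform reduction. The step I would verify most carefully is that the construction of $c$ in the proof of Theorem~\ref{T:nured_no} really is uniform in an index for $\Psi$, since this is what licenses defining the computable function $f$ and invoking the recursion theorem; everything else is immediate from the theorem once this uniformity is in hand.
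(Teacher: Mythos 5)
Your overall strategy is the right one, and it addresses something the paper itself passes over silently: Theorem~\ref{T:nured_no} controls $\Psi^L$, so by itself it yields only $\SRT^2_2 \nsured \D^2_{<\infty}$, and bridging to $\nured$ requires absorbing the computable instance $c$ into the backward functional via a fixed point. That is almost certainly the argument the paper has in mind when stating the corollary without proof.

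However, the step you flagged at the end is exactly where the argument as written has a real gap, and I do not think it can be waved away. The proof of Theorem~\ref{T:nured_no} begins with a non-effective case split: it first assumes there is no infinite $X$ with $\Psi^Y$ finite for all $Y\subseteq X$, and only under that assumption does it show that Cases~1 and~2 cannot arise, so that the canonical forest search at each stage terminates. If that assumption fails for $\Psi_e$, the search simply runs forever, and the procedure producing $c$ from the index $e$ never extends $c^{p_k}$ again. Thus the map $e\mapsto A_{f(e)}$ you define is a genuine partial computable operator, and there is no reason the recursion-theorem fixed point $A_{e^*}=A_{f(e^*)}$ must be a total function. Indeed the always-divergent $\Psi$ gives a concrete example: every $\Psi_e$ built from it is always divergent, so the construction stalls at stage $0$ for every $e$, and the fixed point is the empty partial function. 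One cannot rule this out just from assuming $(\Phi,\Psi)$ is a valid reduction, because when $A_{e^*}$ is partial, $c=A_{e^*}$ is not an instance of $\SRT^2_2$, so the reduction is never invoked and no contradiction arises. In short, your sentence ``Inspection of the proof of Theorem~\ref{T:nured_no} shows that its construction of $c$ is uniformly effective'' is not correct as stated: it is uniformly effective only as a \emph{partial} procedure.

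To repair this you would need to modify the construction so that it always produces a total coloring regardless of $\Psi$, while still satisfying the diagonalization whenever the search does succeed. One standard way is to interleave: impose a time bound on each forest search, and if the bound is exhausted, commit the next block of $c$ to a default color (say constant $0$ with limit $0$) and retry the same requirement at a higher threshold $n$. Under the hypothesis that no bad $X$ exists the searches succeed cofinally, so each $k$-coloring requirement is eventually met; and if a bad $X$ does exist, the resulting total $c$ still witnesses the theorem because the theorem allows $c$ to be arbitrary in that case. With that modification $f$ is total-valued, the recursion theorem yields a genuine stable $2$-coloring $c=A_{e^*}$, and the rest of your argument goes through (including the minor point that if $\Phi^c$ is a $1$-coloring, one pads it to a $2$-coloring so $k\geq 2$). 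So: right idea, correct target, but the uniform totality you invoke does need to be earned, not just asserted.
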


For our next result, recall that a set $X$ is said to have \emph{PA degree} relative to a set $Y$, written $X \gg Y$, if every $Y$-computable infinite finitely-branching tree has an $X$-computable path through it. By a result of Simpson~\cite[Theorem 6.5]{Simpson-1977}, if $X \gg Y$ then there is a $Z$ with $X \gg Z \gg Y$. Below, we say a coloring $c : [\omega]^2 \to 2$ is \emph{$1$-generic} if for each $\Sigma^0_1$-definable class of $\mathbb{P}$-conditions $W$ there is a $p \preceq c$ that meets $W$ (meaning $p \in W$) or avoids it (meaning $q \notin W$ for all $q \leq p$).

\begin{theorem}\label{T:scred_no}
	Let $X$ be a set of PA degree, and let $c : [\omega]^2 \to 2$ be $1$-$X$-generic for $\mathbb{P}$.	For every $k \geq 2$ and every infinite set $Y \ll X$, every stable coloring $d : [Y]^2 \to k$ has an infinite limit-homogeneous set $L \subseteq Y$ that computes no infinite homogeneous set for $c$.
\end{theorem}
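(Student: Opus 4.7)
The plan is to adapt the proof of Theorem~\ref{T:nured_no}, with the role of the explicit construction of $c$ replaced by extraction of properties from the given $1$-$X$-generic. A useful preliminary observation is that $Y \ll X$ actually implies $Y \leq_T X$: the tree $\{\sigma \in 2^{<\omega} : \sigma \subset \chi_Y\}$ is $Y$-computable, infinite, and finitely-branching, with $\chi_Y$ as its unique path, so $X \gg Y$ yields $X \geq_T Y$. In particular, every $\Sigma^{0,Y}_1$ class of $\mathbb{P}$-conditions is $\Sigma^{0,X}_1$. After passing to a subcoloring, I assume that each $Y_j := \{y \in Y : \lim_t d(y,t) = j\}$ is infinite; I fix a color $j$ at the outset and build $L$ as an infinite limit-homogeneous subset of $Y_j$ by a Mathias-style forcing, with finite initial segments $F_e$ and infinite reservoirs $\mathcal{X}_e \subseteq Y_j$, the initial $\mathcal{X}_0$ being produced via $X$'s PA-property together with the machinery of Section~\ref{S:gencore} applied to suitable functionals over $Y$.

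\paragraph{Diagonalization and the three cases.}
At stage $e$, with $p_e \preceq c$ the $c$-condition built so far, I choose a threshold $n_e > n^{p_e}$ and consider
\[
\varphi_e(E) \equiv E \subseteq \mathcal{X}_e \wedge \exists x_0 < x_1 \geq n_e \bigl(\Psi_e^{F_e \cup E}(x_0) \downarrow = 1 = \Psi_e^{F_e \cup E}(x_1)\bigr),
\]
which is $\Sigma^{0,X}_1$. I then run the three-case analysis from the proof of Theorem~\ref{T:nured_no}. In Case~1, Lemma~\ref{L:finseq} gives a $z$ with $\neg\varphi_e(E)$ for $E > z$, and the usual use-principle argument makes $\Psi_e^L$ finite for any $L$ extending $F_e$ inside $\mathcal{X}_e \cap (z,\infty)$. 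In Case~2, the $X$-computable, finitely-branching $\varphi_e$-generated subtree admits an $X$-computable path $P$ since $X$ has PA degree, and $\Psi_e^L$ is finite for any $L$ with $L \setminus F_e \subseteq \ran(P)$. Case~3 yields, by Lemma~\ref{L:exforest}, a $k$-level $\varphi_e$-forest inside $\mathcal{X}_e$, uniformly $X$-computably; applying Lemma~\ref{L:combcore} with $l(x) = \lim_t d(x,t)$ restricts attention to terminals $\alpha$ in the $j$-th level, whose ranges automatically lie in $Y_j$.

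\paragraph{Case 3 via genericity and the main obstacle.}
In Case~3, I consider the $\Sigma^{0,X}_1$ class $V = \{q \in \mathbb{P} : \exists\ \alpha\text{ a terminal in the }j\text{-th level with } c^q(x_0^\alpha, x_1^\alpha) = 0 \wedge \ell^q(x_0^\alpha) = (1, n^q)\}$. Because every $x_0^\alpha, x_1^\alpha \geq n_e > n^{p_e}$, the relevant coordinates are uncommitted at $p_e$, and $V$ is dense below $p_e$: given $q \leq p_e$, I pick a terminal $\alpha$ whose pair lies outside the finitely many coordinates committed in $q$ (which exists since the forest has many terminals above $n_e$, or else one increases $n_e$ and rebuilds) and set $c^{q'}(x_0^\alpha, x_1^\alpha) = 0$ and $\ell^{q'}(x_0^\alpha) = (1, n^{q'})$. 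By $1$-$X$-genericity, $c$ meets $V$, so some terminal $\alpha$ has $c(x_0^\alpha, x_1^\alpha) = 0$ and $\lim_y c(x_0^\alpha, y) = 1$. Setting $F_{e+1} = F_e \cup F_\alpha$ preserves limit-$j$-homogeneity (since $F_\alpha \subseteq \ran(\alpha) \subseteq \mathcal{X}_e \subseteq Y_j$), and by the use-principle $\{x_0^\alpha, x_1^\alpha\} \subseteq \Psi_e^L$ for the eventual $L$, blocking both $0$- and $1$-homogeneity of $\Psi_e^L$ exactly as in the proof of Theorem~\ref{T:nured_no}. The main obstacle I expect is twofold: first, producing the initial $X$-computable infinite reservoir $\mathcal{X}_0 \subseteq Y_j$ when $d$ is an arbitrary oracle and $Y_j$ is naturally only $\Pi^{0,Y\oplus d}_2$; and second, arguing rigorously that $V$ is dense in the sense needed (i.e., that $q$'s finite collection of commitments cannot simultaneously preclude \emph{every} terminal in the $j$-th level), which is the genuinely novel combinatorial ingredient and which I expect to be handled by exploiting the flexibility in choosing the threshold $n_e$ together with the richness provided by Lemma~\ref{L:exforest}'s $k$-level forest construction.
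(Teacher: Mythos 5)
Your plan has a structural gap at the heart of the Case 3 argument: you fix a color $j$ at the outset and build a \emph{single} set $L$ inside $Y_j$, but Lemma~\ref{L:combcore} does not let you choose the level. Applied to a $\seq{\varphi_0,\ldots,\varphi_{k-1}}$-forest and the coloring $l(x) = \lim_t d(x,t)$, the lemma only asserts that there \emph{exists some} $j' < k$ and a terminal $\alpha$ in a $j'$-th-level tree with $l \equiv j'$ on $\ran(\alpha)$. It does not ``restrict attention to terminals in the $j$-th level,'' and in general $j' \neq j$. Since your stem $F_e$ and your requirement that $L \subseteq Y_j$ are both tied to the color you fixed in advance, a terminal of the wrong color is unusable, and nothing in your setup recovers. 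This is exactly why the paper builds $k$ sets $L_0,\ldots,L_{k-1}$ simultaneously (with colors $0,\ldots,k-1$, sharing a common Mathias reservoir), using $k$ distinct formulas $\varphi^m_j$ each referencing its own stem $F_{s,j}$: when Lemma~\ref{L:combcore} hands you a terminal of color $j'$, you extend $F_{s,j'}$ and thereby diagonalize $\Psi^{L_{j'}}$. Your single-formula $\varphi_e(E)$ with the single stem $F_e$ cannot play this role.

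There are two further mismatches with the paper's proof worth flagging. First, you invert the order of genericity and case analysis: you do the three-case split first and invoke genericity only inside Case~3, where you then need \emph{density} of $V$ below $p_e$. As you yourself note, this density claim is the crux, and the fallback ``increase $n_e$ and rebuild'' makes the class $V$ depend on $q$, which is incoherent as a target for genericity. The paper avoids density entirely: it defines a single $\Sigma^{0}_1(X)$ class $D$ of conditions $p$ (parametrized internally over $m \geq n^p$), and $1$-$X$-genericity of $c$ yields some $p \preceq c$ that meets \emph{or avoids} $D$. Meeting $D$ gives the diagonalization directly; avoiding $D$ feeds into the three-case analysis, with Case~3 producing an extension of $p$ in $D$ and hence a contradiction. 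The requirement $m \geq n^p$ is what lets that extension freely commit every pair above $m$ without conflicting with $p$; your $V$ has no such built-in safeguard against $q$ having already committed the relevant pairs. Second, your demand that the reservoirs $\mathcal{X}_e$ live inside $Y_j$ creates the initialization problem you note, since $Y_j$ is typically not of PA degree below $X$; the paper sidesteps this by keeping the reservoirs $I_s \subseteq Y$ (not $Y_j$) with $I_s \ll X$, and only constraining the finite stems $F_{s,j}$ to be limit-$j$-homogeneous.
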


\begin{proof}
	Fix $k$, $Y$, and $d : [Y]^2 \to k$. By passing to a smaller number of colors and a subset of $Y$ if necessary, we may assume that for every infinite $Z \subseteq Y$ satisfying $Z \ll X$, and for each $j < k$, there are infinitely many $x \in Z$ with $\lim_y d(x,y) = j$. Note that we can do this and still assume that $k \geq 2$, as otherwise $d$ would have an $Y$-computable infinite limit-homogeneous set, while by $1$-genericity relative to $X$, no $X$-computable set, and hence no $Y$-computable set, can be infinite and homogeneous for $c$.
	
	We build infinite limit-homogeneous sets $L_0,\ldots,L_{k-1}$ for $d$, with colors $0,\ldots,k-1$, respectively, such that for every functional $\Psi$ there is a $j < k$ with the property that $\Psi^{L_j}$ is not equal to any infinite homogeneous set for $c$. From here it follows that one of the $L_j$ computes no such homogeneous set, as desired.

	We build $L_0,\ldots,L_{k-1}$ by forcing with conditions $(F_0,\ldots,F_{k-1},I)$, where each $F_j$ is a finite set satisfying $\lim_y d(x,y) = j$ for all $x \in F$, and $I > F_0,\ldots,F_{k-1}$ is an infinite subset of $Y$ satisfying $I \ll X$. A condition $(F_0',\ldots,F_{k-1}',I')$ extends $(F_0,\ldots,F_{k-1},I)$ if $F_j \subseteq F_j' \subseteq F_j \cup I$ for each $j < k$, and $I' \subseteq I$. Thus, this is just forcing with $k$ many Mathias conditions sharing a common reservoir. We define
	\[
		(F_{0,0},\ldots,F_{0,k-1},I_0) \geq (F_{1,0},\ldots,F_{1,k-1},I_1) \geq \cdots
	\]
	with $\lim_s |F_{s,j}| = \infty$ for all $j < k$, and take $L_j = \bigcup_s F_{s,j}$. Let $(F_{0,0},\ldots,F_{0,k-1},I_0) = (\emptyset,\ldots,\emptyset,\omega)$, and suppose that we have defined $(F_{s,0},\ldots,F_{s,k-1},I_s)$ for some $s$.

	If $s$ is even, we wish to add one more element to each of the eventual sets $L_j$. By our assumption above, for each $j < k$, there must be infinitely many $x \in I_s$ with $\lim_y d(x,y) = j$. Hence, we may define $(F_{s+1,0},\ldots,F_{s+1,k-1},I_{s+1})$ so that $|F_{s+1,j}| = |F_{s,j}| + 1$ for all $j$, as desired.

	Now let $s$ be odd, say $s = 2e + 1$. Let $\Psi$ be the $e$th member of some fixed listing of all Turing functionals. For each $m \geq \min I_s$ and each $j < k$, let $\varphi^m_j(F)$ be the formula asserting that $F \geq m$ and there exist two numbers $x_1 > x_0 \geq m$ with $\Psi^{F_{s,j} \cup F}(x_0) \downarrow = \Psi^{F_{s,j} \cup F}(x_1) \downarrow = 1$. Let $D$ be the set of all $\mathbb{P}$-conditions $p$ such that for some $m \geq n^p$, the following are true.
	\begin{itemize}
		\item There exists a $\seq{\varphi^m_0,\ldots,\varphi^m_{k-1}}$-forest $\seq{T_{j,0} < \cdots < T_{j,s_j} : j < k }$ inside $I_s$.
		\item For each terminal $\alpha$ in each $T_{j,i}$, there is a finite set $F \geq m$ contained in $\ran(\alpha)$ and two numbers $x_1 > x_0 \geq m$ with $\Psi^{F_{s,j} \cup F}(x_0) \downarrow = \Psi^{F_{s,j} \cup F}(x_1) \downarrow = 1$, such that $x_0,x_1 < n^p$ and $c^p(x_0,x_1) = 0$ and $\ell^p(x_0) = \seq{1,u}$ for some $u$.
	\end{itemize}
	Then $D$ is $\Sigma^0_1$-definable over $I_s$, and hence over $X$, so since $c$ is $1$-$X$-generic, there must be a condition $p \preceq c$ that meets or avoids $D$.

	Suppose first that $p$ meets $D$. By Lemma~\ref{L:combcore}, there is a $j < k$, an $i \leq s_j$, and a terminal $\alpha \in T_{j,i}$, such that $\lim_y d(x,y) = j$ for all $x \in \ran(\alpha)$. Since $m \geq \min I_s$, by definition of $D$ there is an $F \geq \min I_s$ contained in $\ran(\alpha)$ and $x_1 > x_0 \geq \min I_s$ with $\Psi^{F_{s,j} \cup F}(x_0) \downarrow = \Psi^{F_{s,j} \cup F}(x_1) \downarrow = 1$, such that $c^p(x_0,x_1) = 0$ and $\ell^p(x_0) = 1$. In particular, $\lim_y d(x,y) = j$ for all $x \in F$, and since $T_{j,i}$ is a $\varphi^m_j$-tree inside $I_s$, also $F \subseteq I_s$. Let $F_{s+1,j} = F_{s,j} \cup F$, and for all $j' \neq j$ let $F_{s+1,j'} = F_{s,j'}$. Let $I_{s+1}$ be the set of all elements in $I_s$ greater than $F$ and the uses of the computations $\Psi^{F_{s,j} \cup F}(x_0)$ and $\Psi^{F_{s,j} \cup F}(x_1)$. Then $(F_{s+1,0},\ldots,F_{s+1,k-1},I_{s+1})$ is an extension of $(F_{s,0},\ldots,F_{s,k-1},I_s)$ guaranteeing that if $\Psi^{L_j}$ is total, it contains $x_0$ and $x_1$. But as $c(x_0,x_1) = 0$ and $\lim_y c(x_0,y) = 1$, this set is not homogeneous for $c$.

	So suppose instead that $p$ avoids $D$, and let $m$ be the larger of $\min I_s$ and $n^p$. We consider the following three cases.

	Case 1: for some $j < k$, there exists no infinite $\varphi^m_j$-sequence inside $I_s$. Fix $n_0$ such that $\neg \varphi(F)$ holds for all finite sets subsets $F > n_0$ of $I$. Let $I_{s+1} = \{x \in I_s : x > n_0\}$ and let $F_{s+1,0},\ldots,F_{s+1,k-1}$ be $F_{s,0},\ldots,F_{s,k-1}$, respectively. Then $(F_{s+1,0},\ldots,F_{s+1,k-1},I_{s+1})$ extends $(F_{s,0},\ldots,F_{s,k-1},I_s)$, and if $B$ is any set with $F_{s+1,j} \subseteq B \subseteq F_{s+1,j} \cup I_{s+1}$ and $\Psi^B$ is total, the set it defines can contain at most one numbers $x \geq m$. In particular, $\Psi^{L_j}$ cannot be an infinite set.

	Case 2: for some $j < k-1$, there exists an $I_s$-computable infinite $\varphi^m_j$-sequence inside $I_s$ with infinite $\varphi^m_{j+1}$-generated subtree. Notice that every path through this subtree is an infinite subset of $I_s$. Moreover, this subtree is $I_s$-computable and $I_s$-computably bounded, so since $I_s \ll X$, there is a path $P$ satisfying $P \ll X$. By definition, $\neg \varphi(F)$ holds for every finite set $F \subseteq \ran(P)$. Let $I_{s+1} = \ran(P)$ and let $F_{s+1,0},\ldots,F_{s+1,k-1}$ be $F_{s,0},\ldots,F_{s,k-1}$, respectively. As in the previous case, this ensures that $\Psi^{L_{j+1}}$ cannot be an infinite set.

	Case 3: otherwise. We are in the hypotheses of Lemma~\ref{L:exforest}, so we may fix a $\seq{\varphi^m_0,\ldots,\varphi^m_{k-1}}$-forest $\seq{T_{j,0} < \cdots < T_{j,s_j} : j < k }$ inside $I_s$. For every terminal $\alpha \in T_{j,i}$ there is a finite set $F$ and two numbers $x_1 > x_0 \geq m$ with $\Psi^{F_{s,j} \cup F}(x_0) \downarrow = \Psi^{F_{s,j} \cup F}(x_1) \downarrow = 1$. Let $u$ be the maximum of all these computations, across all $\alpha$ and $F$, which means $u$ is also bigger than all the numbers $x_0$ and $x_1$. Let $q$ be the extension of $p$ with $n^{q} = u$, and with $c^{q}(x,y) = 0$ and $\ell^{q}(x) = \seq{1,u}$ for all $x,y$ with $n^p \leq x < y < u$. In particular, since $m \geq n^p$ and $u > x_1 > x_0 \geq m$, we have that $c^q(x_0,x_1) = 0$ and $\ell^q(x_0) = \seq{1,u}$. But this makes $q$ an extension of $p$ in $D$, a contradiction. Hence, this case cannot obtain.
\end{proof}

It is easy to show that there is a $1$-generic for $\mathbb{P}$ computable in $\emptyset'$ and that every such generic is low. The proof is the same as for Cohen forcing; see~\cite[Lemma 2.6]{Jockusch-1980}.

\begin{corollary}
	There is a low stable coloring $c : [\omega]^2 \to 2$ such that for every $k \geq 2$, every stable coloring $d : [\omega]^2 \to k$ has an infinite limit-homogeneous set $L$ that computes no infinite homogeneous set for $c$.
\end{corollary}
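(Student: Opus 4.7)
The plan is to combine Theorem~\ref{T:scred_no} with a suitable choice of PA oracle $X$ and generic coloring $c$. The theorem produces, for each $X$ of PA degree and each $1$-$X$-generic $c \preceq \mathbb{P}$, a $c$ with the desired diagonalization property against stable colorings $d : [\omega]^2 \to k$ (using $Y = \omega$, which trivially satisfies $\omega \ll X$ since $X$ has PA degree). The only remaining task is to arrange that this $c$ is \emph{low} in the absolute sense, even though the theorem itself requires $X$ to be of PA degree.

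For this, I would first apply the Low Basis Theorem to the $\Pi^0_1$ class of completions of PA, obtaining a set $X$ that has PA degree and is itself low, so $X' \equiv_T \emptyset'$. Next, by the relativization of the cited argument of Jockusch (the standard Cohen-style argument that $1$-generics built by decision trees in $\emptyset'$ are low, as recorded in the remark preceding the corollary), there is a $1$-$X$-generic $c$ for $\mathbb{P}$ that is computable in $X'$ and low over $X$, meaning $(c \oplus X)' \leq_T X'$. Combining these facts, $c' \leq_T (c \oplus X)' \leq_T X' \equiv_T \emptyset'$, so $c$ is low in the absolute sense.

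One should also check that the generic $c$ is in fact a stable $2$-coloring. The set of $\mathbb{P}$-conditions $p$ with $\{(x,y) : x < y < n\} \subseteq \dom(c^p)$ for a given $n$, and the set of conditions with $x \in \dom(\ell^p)$ for a given $x$, are both dense and computable, hence $\Sigma^0_1$-definable over $X$; by $1$-$X$-genericity, $c$ meets each, so $c$ is total and every $x$ has a committed limit color under $c$. With $X$ and $c$ in hand, Theorem~\ref{T:scred_no} applied with $Y = \omega$ then yields, for each $k \geq 2$ and each stable $d : [\omega]^2 \to k$, an infinite limit-homogeneous set $L$ computing no infinite homogeneous set for $c$, as required. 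The only delicate point is the simultaneous demand that $X$ have PA degree and that $c$ be low; this is the reason for choosing $X$ via the Low Basis Theorem rather than an arbitrary PA degree, and it is the substantive step beyond directly quoting the preceding remark.
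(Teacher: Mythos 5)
Your proof is correct and takes essentially the same approach as the paper: choose $X$ of low PA degree (via the Low Basis Theorem), take a low $1$-$X$-generic $c$, and apply Theorem~\ref{T:scred_no} with $Y = \omega$. You are somewhat more explicit than the paper in spelling out the relativized lowness computation $c' \leq_T (c \oplus X)' \leq_T X' \equiv_T \emptyset'$ and in verifying by density that the generic $c$ is a total stable coloring, but these are the same points the paper leaves implicit via the remark preceding the corollary.
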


\begin{proof}
	Take $S$ in the theorem to be a set of low PA degree, and let $c$ be any low $1$-$S$-generic for $\mathbb{P}$.	
\end{proof}

\noindent Note that since $\SRT^2_2$ and $\D^2_2$ are computably equivalent, we cannot hope to replace ``low'' above with ``computable''. The corollary can thus be seen as saying that this is a sharp division with respect to the jump hierarchy. 

\begin{corollary}\label{C:SRT22nscredD2}
	$\SRT^2_2 \nscred \D^2_{<\infty}$.	
\end{corollary}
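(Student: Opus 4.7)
The plan is to deduce this directly from the unnumbered corollary immediately above. First I would fix a low stable coloring $c : [\omega]^2 \to 2$ as supplied by that result, so that for every $k \geq 2$ and every stable coloring $d : [\omega]^2 \to k$ there is an infinite limit-homogeneous set for $d$ that computes no infinite homogeneous set for $c$. Since $c$ is itself a valid instance of $\SRT^2_2$, it is the appropriate ``test instance'' against which to measure any candidate strong computable reduction to $\D^2_{<\infty}$.

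Next, I would assume for contradiction that $\SRT^2_2 \scred \D^2_{<\infty}$. Applying the hypothesized reduction to $c$ must produce a $c$-computable stable coloring $\widehat{d} : [\omega]^2 \to \widehat{k}$ for some $\widehat{k} \geq 1$, with the property that every infinite limit-homogeneous set for $\widehat{d}$ computes an infinite homogeneous set for $c$. In the main case $\widehat{k} \geq 2$, the preceding corollary applied directly to $\widehat{d}$ supplies an infinite limit-homogeneous set $L$ for $\widehat{d}$ that computes no infinite homogeneous set for $c$, immediately contradicting the reduction. In the degenerate case $\widehat{k} = 1$, the coloring $\widehat{d}$ is constant and every infinite set is vacuously limit-homogeneous for it; the same contradiction is then reached by applying the preceding corollary to any fixed stable $2$-coloring to obtain an infinite set that computes no infinite homogeneous set for $c$.

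There is no genuine obstacle to overcome here: all of the combinatorial and forcing work has already been carried out in Theorem~\ref{T:scred_no} and in the strengthening of it to the low setting. The sole bookkeeping point is to dispose of the trivial $\widehat{k} = 1$ case, which is immediate.
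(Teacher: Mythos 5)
Your deduction is correct and is exactly the argument the paper intends: Corollary~\ref{C:SRT22nscredD2} is stated without proof as an immediate consequence of the preceding unnumbered corollary (or, indeed, of Theorem~\ref{T:scred_no} directly, which already treats \emph{arbitrary} stable $k$-colorings, so the $c$-computability of $\widehat{d}$ is never even used). Your handling of the degenerate $\widehat{k}=1$ case is fine; one could also dispose of it by noting directly that no $c$-computable infinite set can be homogeneous for the $1$-generic $c$.
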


\section{$\COH$ and uniform reducibility}\label{S:COH_uniform}

In this section, we look at uniform reducibility and the principle $\COH$, and among other things, show that $\COH$ is not uniformly reducible to $\SRT^2_{<\infty}$. The main technical ingredient will be Lemma~\ref{L:COHnuSRT} below. To state it, we first need some definitions. Say $\sigma' \in \omega^{<\omega}$ is a \emph{$1$-extension} of $\sigma \in \omega^{<\omega}$ if $\sigma' \succeq \sigma$ and $|\sigma'| = |\sigma| + 1$.

\begin{definition}\
	\begin{enumerate}
		\item A \emph{tree enumeration} is a partial computable function $U$ from $\omega$ to the finite subsets of $Inc(\omega)$ such that $U(0) \simeq \{\emptyset\}$, and if $U(x+1) \downarrow$ then $U(x) \downarrow$, and every string in $U(x+1)$ is a $1$-extension of some string in $U(x)$.
		\item An \emph{uniform sequence} of tree enumerations is a computable (set of indices for a) sequence of tree enumerations $\seq{U_l : l \in \omega}$ such that if $U_l(x) \downarrow$ and $U_{l'}(x') \downarrow$ for some $l' < l$ and numbers $x,x'$, then there is a stage $s$ such that $U_l(x)[s] \uparrow$ and $U_{l'}(x)[s] \downarrow$.
	\end{enumerate}
\end{definition}

\noindent Thus, all the members of $U(x)$ are strings of length $x$, and we think of $U$ as enumerating a subtree of $\omega^{<\omega}$, which we also identify with $U$. If $U$ is total as a function, then $U$ as a tree is actually computable, computably bounded, and infinite. And in this case, the the non-extendible nodes of $U$ are precisely those $\sigma$ that belong to some $U(x)$ but have no extension in $U(x+1)$. We say $\sigma$ \emph{looks extendible} at $s$ if it has an extension in the largest $x$ such that $U(x)[s] \downarrow$.

We can now think of a uniform sequence of tree enumerations as a sequence of attempts at enumerating an infinite tree, such that each attempt stops before the next attempt begins. In particular, at most one of these attempts can succeed. Observe also that there is a computable listing of all (indices of) uniform sequences of tree enumerations. We assume that if $U_l(x)[s] \downarrow$ for some $x$ then $l < s$, and say $U_l$ \emph{looks infinite} at $s$ if there is an $x$ such that $U_l(x)[s] \downarrow$ and $U_{l'}(0)[s] \uparrow$ for all $l' > l$. Note that if $U$ is infinite as a tree, then the range of every path through $U$ is infinite.

The way this notion will come up is as follows. Suppose $\varphi_0,\ldots,\varphi_{k-1}$ are $\Sigma^0_1$ formulas of finite sets. We can then search for a computable $\seq{\varphi_0,\ldots,\varphi_{k-1}}$-forest using the simple inductive method in the proof of Lemma~\ref{L:exforest}, which we call the \emph{canonical search}. Recall how this goes. For each $j$, we try to build a $\varphi_j$-sequence. As this sequence gets longer, we build more of its $\varphi_{j+1}$-generated subtree: each new member of the $\varphi_j$-sequence (i.e., each new $\varphi_j$-tree) allows us to build one more level of this subtree. If the subtree eventually becomes a $\varphi_{j+1}$-tree (i.e., every terminal node has a finite subset of its range satisfying $\varphi_{j+1}$), we can add it to a $\varphi_{j+1}$-sequence, and start the construction of a new $\varphi_{j+1}$-generated subtree. This gives rise to a uniform sequence of tree enumerations: we let $U_l$ enumerate the levels of the $l$th $\varphi_{j+1}$-generated subtree, until, if ever, it becomes a $\varphi_{j+1}$-tree. (For $\varphi_0$, we look at longer and longer segments of $\omega$, or whatever infinite set we are working inside, to try to build the $l$th $\varphi_0$-tree, so we let $U_l$ enumerate these initial segments.)

We can now state the lemma. Define the following fast-growing computable function, $\# : \omega \to \omega$. Let $\#(0) = 1$, and for $k \geq 1$ let
\[
\#(k) = (k+1) \cdot \#(k-1)^k.
\]
Thus, $\#(1) = 2$, $\#(2) = 3 \cdot 2^2 = 12$, $\#(3) = 4 \cdot 12^3 = 6912$, and so on.

\begin{lemma}\label{L:COHnuSRT}
	Fix $k \geq 1$ and finite sets $C_0 \subseteq C$ with $|C - C_0| = k$. Let $d$ be a partial computable coloring $[\omega]^2 \to C$, and let $\seq{U_l : l \in \omega}$ be an ordered sequence of tree enumerations. There exists a computable coloring $c : \omega \to k+1$, and for each $j < k$, a computable coloring $c_j : \omega \to \#(k-1)$, with the following properties.
	\begin{enumerate}
		\item If $d$ is total and stable, some $U_l$ is infinite, and $\lim_y d(x,y) \notin C_0$ for almost all $x$ in the range of each path through $U_l$, then $d$ has an infinite homogeneous set $H$ contained in the range of a path through $U_l$ such that $\Psi^H$ is not an infinite almost-homogeneous set for $c$ or one of the $c_j$.
		\item Indices for $c$ and the $c_j$ as computable functions can be found uniformly from $k$, canonical indices for $C_0$ and $C$, an index for $d$ as a partial computable function, an index for $\seq{U_l : l \in \omega}$, and an index for $\Psi$.
	\end{enumerate}
\end{lemma}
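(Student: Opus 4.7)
I would build the colorings $c$ and $c_0, \ldots, c_{k-1}$ by a single staged construction that runs, in parallel for every $l$, a copy of the canonical search of Lemma~\ref{L:exforest} inside $U_l$, and uses the $\Psi$-computations on extendible strings to dictate the values of the colorings on fresh inputs. To keep the two agendas coordinated, I first fix a bijection identifying $C - C_0$ with $\{0, \ldots, k-1\}$, so that the stable coloring $d$ induces a $k$-coloring $x \mapsto \lim_y d(x, y)$ on the relevant part of each $\ran(U_l)$. I then define $\Sigma^0_1$ properties $\varphi^l_0, \ldots, \varphi^l_{k-1}$, where $\varphi^l_j(F)$ says that there is a finite increasing sequence $x_0 < \cdots < x_{m-1}$ with $\Psi^F(x_i){\downarrow} = 1$ for every $i$ whose $c_j$-colors (respectively $c$-colors, when $j = k-1$) display enough variety to block almost-homogeneity at that level. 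The exact choice of $m$, and the counting constants $\#(k-1)$ and $k+1$, are dictated by the recursion $\#(k) = (k+1) \cdot \#(k-1)^k$: this number is exactly the total count of joint $(c, c_0, \ldots, c_{k-1})$-patterns required to carry out the $k$ nested diagonalizations.

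With the $\varphi^l_j$'s in hand, the colorings themselves are built in the natural way. At stage $s$, I simulate every $U_l$ with $l < s$ and its canonical search for $s$ steps; whenever a new leaf $\sigma$ of a level-$j$ tree becomes extendible and $\Psi^{\ran(\sigma)}(x){\downarrow} = 1$ for some $x$ not yet in the domain of the colorings, I assign $c(x)$ and $c_j(x)$ consistently across all active requirements, using the freedom provided by $\#(k-1)$ colors to make simultaneous progress toward $\varphi^l_j$ for every relevant $\sigma$. Each stage is a finite scheduling problem, and the counting constants are exactly what guarantees that a valid move always exists. The colorings are then computable, and indices for them are produced uniformly from $k$, canonical indices for $C_0$ and $C$, and the given indices for $d$, $\seq{U_l : l \in \omega}$, and $\Psi$, giving (2).

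For (1), fix $l$ so that $U_l$ is infinite and $\lim_y d(x,y) \in C - C_0$ holds for almost all $x$ in the range of every path through $U_l$. Analyze the canonical search for a $\seq{\varphi^l_0, \ldots, \varphi^l_{k-1}}$-forest inside $\ran(U_l)$ by the trichotomy familiar from Theorem~\ref{T:nured_no}. If there is no infinite $\varphi^l_j$-sequence for some $j$, Lemma~\ref{L:finseq} provides a threshold $z$ past which no finite set satisfies $\varphi^l_j$, from which I extract an infinite $H \subseteq \ran(P)$ (for some path $P$ through $U_l$) homogeneous for $d$ along which $\Psi^H$ is forced to be finite, since an infinite image would eventually witness $\varphi^l_j$. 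If instead some infinite $\varphi^l_j$-sequence has infinite $\varphi^l_{j+1}$-generated subtree, take a path through this subtree and draw the analogous conclusion. Otherwise Lemma~\ref{L:exforest} yields a $\seq{\varphi^l_0, \ldots, \varphi^l_{k-1}}$-forest inside $\ran(U_l)$, and Lemma~\ref{L:combcore} applied to $x \mapsto \lim_y d(x, y)$ produces $j < k$ and a terminal $\alpha \in T^l_{j,i}$ along whose range this limit is constantly $j$. Extend $\ran(\alpha)$ to an infinite homogeneous $H \subseteq \ran(P)$ for $d$ along a path $P$ of $U_l$ through $\alpha$; terminality of $\alpha$ gives $F \subseteq \ran(\alpha) \subseteq H$ with $\varphi^l_j(F)$, and the color variety guaranteed by $\varphi^l_j$ defeats almost-homogeneity of $\Psi^H$ for $c_j$ (or $c$ when $j = k-1$).

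The main obstacle is calibrating the $\varphi^l_j$'s, the coloring-assignment scheduler, and the counting constants so that the recursion at all $k$ levels is simultaneously satisfiable and everything lines up with $\#(k-1)$ and $k+1$; once the combinatorial bookkeeping is correct, the remainder, including uniformity in the parameters of the lemma, is an essentially mechanical extension of the framework of Section~\ref{S:gencore}.
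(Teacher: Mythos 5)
Your proposal diverges from the paper's proof in a structural way, and the divergence hides genuine gaps.

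The paper proves the lemma by \emph{induction on $k$}. For $k>1$, the colorings $c_j$ are not built in the same flat pass that builds $c$: instead, from the canonical search one extracts a \emph{new} ordered sequence of tree enumerations $\seq{\widehat{U}_m : m \in \omega}$ (recording, for each $m$, the attempted $\varphi_{\sigma,j}$-generated subtree, or, for $j=0$, attempted initial segments of paths), and $c_j$ is obtained by \emph{recursively invoking the lemma} with color set $C'_0 = C_0 \cup \{j\}$ (hence $k-1$ in place of $k$) and tree sequence $\seq{\widehat{U}_m}$, then pairing the resulting $k-1$-ary outputs. The constant $\#(k-1)$ is the range of that recursive call, and $\#(k) = (k+1)\cdot\#(k-1)^k$ only appears when Theorem~\ref{T:nured_simple} combines $c$ with the $k$ many $c_j$'s by a product. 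Your single-pass scheduler skips this recursion entirely, and your claim that ``the counting constants are exactly what guarantees that a valid move always exists'' is left as an assertion; nothing in the proposal explains why a simultaneous consistent assignment across all active requirements exists, and the paper does not attempt such a flat argument.

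More pointedly, two steps would fail as written. First, your $\varphi^l_j(F)$ is defined to demand that the $\Psi^F$-computed points ``display enough variety in their $c_j$-colors,'' which makes the formulas \emph{reference} the colorings you are in the middle of constructing from them --- a circularity the paper avoids precisely because the $c_j$ come from a separate recursive call, and because the paper's formulas $\varphi_{\sigma,j}$ only demand a single convergence $\Psi^{\bigcup_m H_{\sigma_m,j}\cup F}(x){\downarrow}=1$ together with $d$-homogeneity conditions; they say nothing about $c$-values, which are instead controlled by the construction painting $c(t)=n\bmod(k+1)$ as the level $n$ of defined $H$-sets increases. Second, your Case~2 reasoning (``some infinite $\varphi^l_j$-sequence has infinite $\varphi^l_{j+1}$-generated subtree, take a path through this subtree and draw the analogous conclusion'') does not give that $\Psi^H$ is finite. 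Along a path through that subtree one only knows that no finite $F$ in the path's range satisfies $\varphi^l_{j+1}$, which blocks the $c_{j+1}$-diagonalization, not convergence of $\Psi$ at arbitrary points. The paper handles this situation differently: it is exactly the event that some $\widehat{U}_m$ is infinite, at which point the inductive hypothesis (applied to $\widehat{U}_m$, $d$, and $C'_0$) produces $H$ with $\Psi^H$ not almost-homogeneous for $c_j$. Also note the canonical searches in the paper run \emph{inside the ranges of individual terminal nodes $\sigma$ of $U_l$} rather than inside $\ran(U_l)$, which is essential for the resulting $H$ to live inside the range of a single path through $U_l$; your phrasing ``a $\seq{\varphi^l_0,\ldots,\varphi^l_{k-1}}$-forest inside $\ran(U_l)$'' loses this.
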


\noindent We proceed by induction, starting with $k = 1$. 

\subsection{The $k = 1$ case.}

Note that since $\#(k-1) = \#(0) = 1$ in this case, and since every set is homogeneous for the trivial coloring $\omega \to 1$, we build only the coloring $c : \omega \to k+1 = 2$ here.

\begin{proof}[Proof of Lemma~\ref{L:COHnuSRT} for $k = 1$]
	Assume for simplicity that $C - C_0 = \{0\}$. We build $c$, and for each $\sigma \in \omega^{<\omega}$, try to define a finite set $H_\sigma \subseteq \ran(\sigma)$ homogeneous for $d$ with color $0$ such that $H_{\sigma} > H_\sigma'$ for all $\sigma \succ \sigma'$. At each stage, all but finitely many of the $H_\sigma$ will be undefined, and each of those that are defined may subsequently become undefined and redefined any number of times. However, if $d$ satisfies the hypotheses of the lemma and $U_l$ is infinite, we ensure there is a path $P$ through $U_l$ satisfying one of the following two outcomes.
	\begin{itemize}
		\item $d$ has an infinite homogeneous set $H \subseteq \ran(P)$ such that $\Psi^H$ is not infinite.
		\item There exists $\sigma_0 \prec \sigma_1 \prec \cdots \prec P$ such that each of the sets $H_{\sigma_n}$ stabilizes to a finite set $H_{P_n}$, and $H = \bigcup_n H_{P_n} \subseteq \ran(P)$ is infinite and $\Psi^H$ is not almost-homogeneous for $c$.
	\end{itemize}
	Certainly this suffices to prove the lemma.
	
	We write $d(x,y)[s] \downarrow$ to mean that the coloring $d$ converges on $(x,y)$ in $s$ or fewer steps, and follow the convention that if $d(x,y)[s] \downarrow$ then also $d(x,y')[s] \downarrow$ for all $y'$ with $x < y' \leq y$. Given $x$, we computably guess at $\lim_y d(x,y)$ as follows: at stage $s$, choose the largest $y$ with $x < y \leq s$ such that $d(x,y)[s] \downarrow$, and guess $d(x,y)$ to be the limit. Thus, if $d$ is not actually a stable coloring, our guess about the limit may change infinitely often, but otherwise it will eventually be correct.
	
	
	\medskip
	\emph{Construction.} Initially, let all the $H_\sigma$ be undefined. At the start of stage $s$, assume we have defined $c$ on $\omega \res s$ and let $l$ be such that $U_l$ looks infinite. We only define $H_\sigma$ at this stage if $\sigma$ is a terminal node in $U_l$ that looks extendible at this stage. Alongside this definition, we also define a number $u_\sigma \geq H_\sigma$. By induction, we assume that each such $\sigma$ has the same number, say $n$, of (not necessarily proper) initial segments $\sigma'$ for which $H_{\sigma'}$ is defined already, and that if $H_{\sigma'}$ is defined then $\sigma'$ is such an initial segment. Given any $\sigma$ on $U_l$ (not necessarily terminal), we denote its $(m+1)$st initial segment $\sigma'$ such that $H_{\sigma'}$ is currently defined by $\sigma_m$, so that necessarily $n < m$ by assumption. Thus, if $\sigma' \prec \sigma$ and $\sigma'_m$ is defined then so is $\sigma_m$ and $\sigma'_m = \sigma_m$. We will also maintain that if $\sigma$ and $\sigma'$ are any two strings with $\sigma_m$ and $\sigma'_m$ defined then $|\sigma_m| = |\sigma'_m|$.
	
	Now for all stages $t \geq s$, we start coloring $c(t) = n \mod 2$, not defining or undefining any sets, until one or more of the following conditions applies at $t$.
	\begin{enumerate}
		\item Each terminal $\sigma$ in $U_l$ that looks extendible has $H_\sigma$ undefined (so $\sigma \neq \sigma_{n-1}$), and there is a finite set $F$ such that:
		\begin{itemize}
			\item $\emptyset \neq F \subseteq \ran(\sigma)$;
			\item $\max_{m < n} u_{\sigma_m} < F$;
			\item there is an $x \geq \max_{m < n} u_{\sigma_m}$ such that $\Psi^{\bigcup_{m < n} H_{\sigma_m} \cup F}(x) \downarrow = 1$;
			\item $d(x,y) \downarrow = 0$ for all $x < y$ with $x \in \bigcup_{m < n} H_{\sigma_m} \cup F$ and $y \in F$;
			\item all the $x \in F$ look like they limit to $0$ under $d$.
		\end{itemize}
		\item Some $\sigma_m$ no longer looks extendible.
		\item Some $x$ in some $H_{\sigma_m}$ looks like it limits to a color other than $0$.
		\item $U_l$ no longer looks infinite.
	\end{enumerate}
	
	By usual conventions, the first condition means that $t$ is larger than $\max F$, the number $x$, and the use of the computation $\Psi^{\bigcup_{m < n} H_{\sigma_m} \cup F}(x) \downarrow = 1$. In particular, $c(x) = n \mod 2$. If this condition applies, fix the least such $F$ in each $\sigma$, define $H_\sigma = F$, and define $u_\sigma = t$. If the second condition applies, then for any such $\sigma_m$ we undefine the set $H_{\sigma_m}$ and number $u_{\sigma_m}$. If the third condition applies, we choose the least $m$ for which there is some such $H_{\sigma_m}$, and then undefine $H_{\sigma'_{m'}}$ and $u_{\sigma'_{m'}}$ for all $\sigma'$ and all $m' \geq m$. Note that each terminal $\sigma$ on $U_l$ that looks extendible still has the same number of initial segments $\sigma'$ for which $H_{\sigma'}$ is defined, and in the first case it is $n+1$ many. If the fourth condition applies, undefine all $H_\sigma$ and $u_\sigma$, and start over with $U_{l+1}$ instead of $U_l$. This completes the construction.
	
	\medskip
	\noindent \emph{Verification.} Clearly, $c$ is total and computable. So suppose $d$ is is total and stable, $U_l$ is infinite, and almost all $x$ in the range of each path through $U_l$ limit to $0$ under $d$. If there is a path $P$ through $U_l$ and an infinite homogeneous set $H \subseteq \ran(P)$ for $d$ such that $\Psi^H$ is not an infinite set, then we are done. So suppose not. Let $s_0$ be the least stage after which $U_l$ looks infinite and none of the finitely many $x$ that do not limit to $0$ look like they do. Note that if $H_\sigma$ is defined after stage $s_0$, it is consequently to a set all of whose members limit to $0$ under $d$.
	
	For every $n$, we claim there is a stage $s \geq 0$ such that if $P$ is a path through $U_l$ and $\sigma \prec P$ is a terminal node of $U_l$ at stage $s$ (which necessarily looks extendible), then each $H_{\sigma_m}$ with $m < n$ is defined and has stabilized to some finite set $H_{P_m}$. This means, in particular, that every element of $H_{P_m}$ actually limits to $0$ under $d$. We prove the claim by induction. Fix $n$, and let $s$ witness that the claim holds for all $m < n$. We may assume that $s$ enumerates enough of $U_l$ so that no terminal $\sigma$ on $U_l$ equals $\sigma_{n-1}$, so the latter is a proper initial segment of $\sigma$. For each path $P$ through $U_l$ and $\sigma \prec P$, write $u_{P_m}$ for $u_{\sigma_m}$ if $H_{\sigma_m}$ has stabilized to $H_{P_m}$.
	If from some stage after $s$ onwards, some (and hence every) terminal $\sigma$ on $U_l$ that looks extendible has $H_\sigma$ undefined, then it must be that the first condition of the construction does not apply. By choice of $s_0$ and $s$, and the fact that $U_l$ is a finitely-branching tree, this can only happen if there is a number $z \geq \max_{m < n} u_{P_m}$ and a path $P$ through $U_l$ whose range has no finite subset $F > z$ homogeneous for $d$ with color $0$ such that $\Psi^{\bigcup_{m < n} H_{Q_m} \cup F}(x) \downarrow = 1$ for some $x \geq \max_{m < n} u_{P_m}$. But then let $H$ be any infinite homogeneous set for $d$ containing $\bigcup_{m < n} H_{P_m}$ and otherwise only elements of $\ran(P)$ bigger than $z$, and observe that $\Psi^H$ is bounded by $\max_{m < n} u_{P_m}$ and is thus finite, which is a contradiction. We conclude that there are infinitely many stages after $s$ at which the $H_\sigma$ for $\sigma$ terminal in $U_l$ and looking extendible are defined. And since, for any path $P$ through $U_l$ and any $\sigma \prec P$, such an $H_\sigma$ can later only be undefined if some element of it starts looking like it does not limit to $0$ (since it will always look extendible), it is now easy to see that one of these definitions must become permanent.
	
	To conclude the proof, fix any path $P$ through $U_l$. Note that since each $H_{P_n}$ is non-empty, $H = \bigcup_n H_{P_n}$ is infinite. Furthermore, for each $n$ there is an $x \geq \max_{m < n} u_{P_m} \geq \bigcup_{m < n} H_{P_m}$ such that $c(x) = n \mod 2$ and $\Psi^{\bigcup_{m \leq n} H_{P_m}}(x) \downarrow = 1$. Since $\min F_{m+1}$ is larger than the use of this computation, this also means that $\Psi^H(x) \downarrow = 1$. Thus, $\Psi^H$ contains infinitely many numbers colored $0$ by $c$, and infinitely many colored $1$, and so is not almost-homogeneous for $c$.
\end{proof}

\subsection{The $k > 1$ case.}

In the $k=1$ case of Lemma~\ref{L:COHnuSRT}, the coloring $d$ is essentially a $1$-coloring, so finding a homogeneous set for it is straightforward. In the general case, we must now instead use the technology of Section~\ref{S:gencore} to build homogeneous sets, which complicates the argument considerably.

\begin{proof}[Proof of Lemma~\ref{L:COHnuSRT} for $k > 1$]
	We focus on the differences with the $k = 1$ case. Assume the lemma for $k$, and for simplicity, assume also that $C - C_0 = \{0,\ldots,k-1\}$. Along with $c$, we try to define $k$ many finite sets $H_{\sigma,0},\ldots,H_{\sigma,k-1}$ for each $\sigma \in \omega^{< \omega}$, with $H_{\sigma,j}$ homogeneous for $d$ with color $j$. To build the $c_j$, we appeal to the inductive hypothesis. If $d$ and $U_l$ satisfy the hypotheses of the lemma, we ensure there is a path $P$ through $U_l$ satisfying one of the following outcomes.
	\begin{itemize}
		\item $d$ has an infinite homogeneous set $H \subseteq \ran(P)$ such that $\Psi^H$ is not infinite.
		\item For some $j < k$, $d$ has an infinite homogeneous set $H \subseteq \ran(P)$ such that $\Psi^H$ is not almost-homogeneous for $c_j$.
		\item There exists $\sigma_0 \prec \sigma_1 \prec \cdots \prec P$ such that for each $j < k$, each of the sets $H_{\sigma_n,j}$ stabilizes to a finite set $H_{P_n,j}$, and for some such $j$, $H = \bigcup_n H_{P_n,j} \subseteq \ran(P)$ is infinite and $\Psi^H$ is not almost-homogeneous for $c$.
	\end{itemize}
	
	\medskip
	\emph{Construction of $c$.} Whenever we define one of $H_{\sigma,0},\ldots,H_{\sigma,k-1}$ for some $\sigma$ then we define all of them, and exactly one of these sets is non-empty. As before, we also define a number $u_\sigma$, and we follow the same conventions and notations. Initially, let all the $H_{\sigma,j}$ be undefined. At the start of stage $s$, we assume that $c$ is defined on $\omega \res s$, and that every terminal $\sigma$ on $U_l$ that looks extendible has the same number, $n$, of initial segments $\sigma'$ for which $H_{\sigma',0},\ldots,H_{\sigma',k-1}$ are defined. For each such $\sigma$ and each $j < k$, let $\varphi_{\sigma,j}(F)$ be the $\Sigma^0_1$ formula asserting:
	\begin{itemize}
		\item $\emptyset \neq F \subseteq \ran(\sigma)$;
		\item $\max_{m < n} u_{\sigma_m} < F$;
		\item there is an $x \geq \max_{m < n} u_{\sigma_m}$ such that $\Psi^{\bigcup_{m < n}H_{\sigma_m,j} \cup F}(x) \downarrow = 1$;
		\item $d(x,y) \downarrow = j$ for all $x < y$ with $x \in \bigcup_{m < n} H_{\sigma_m,j} \cup F$ and $y \in F$.
	\end{itemize}
	Note that if $\sigma$ is terminal in $U_l$ at $s$ and $\sigma'$ is terminal at some $t \geq s$, then $\varphi_{\sigma,j}$ and $\varphi_{\sigma',j}$ are the same formula so long as we did not define or undefine any sets between stages $s$ and $t$. 
	
	For all stages $t \geq s$, we start coloring $c(t) = n \mod (k+1)$ until one of the following conditions applies at $t$.
	\begin{enumerate}
		\item Each terminal $\sigma$ in $U_l$ that looks extendible has $H_\sigma$ undefined and the canonical search has found a $\seq{\varphi_{\sigma,0},\ldots,\varphi_{\sigma,k-1}}$-forest.
		\item Some $\sigma_m$ no longer looks extendible.
		\item Some $x$ in some $H_{\sigma_m,j}$ looks like it limits to a color other than $j$.
		\item $U_l$ no longer looks infinite.
	\end{enumerate}
	If the first condition applies, consider any terminal $\sigma$ on $U_l$ that looks extendible, and let $\seq{T_{j,0} < \cdots < T_{j,s_j} : j < k }$ be the $\seq{\varphi_{\sigma,0},\ldots,\varphi_{\sigma,k-1}}$-forest inside it. Thus, for each $j$, every terminal $\alpha$ in every $T_{j,i}$ has a finite subset $F$ of its range satisfying $\varphi_{\sigma,j}$. Now if for each $\sigma$ there is a $j$ and an $F$ as above all of whose elements look like they limit to $j$ under $d$, then define $H_{\sigma,j} = F$ for the least such $j$ and $F$, and define $H_{\sigma,j'} = \emptyset$ for all $j' \neq j$ and $u_{\sigma} = t$. Otherwise, do nothing. If the second condition applies, then for any such $\sigma_m$ we undefine the sets $H_{\sigma_m,0},\ldots,H_{\sigma_m,k-1}$ and number $u_{\sigma_m}$. If the third condition applies, fix the least $m$ for which there is some such $H_{\sigma_m,j}$, and then undefine $H_{\sigma'_{m'},0},\ldots,H_{\sigma'_{m'},k-1}$ and $u_{\sigma'_{m'}}$ for all $\sigma'$ and all $m' \geq m$.
	Finally, if the fourth condition applies, undefine all $H_{\sigma,j}$. Naturally, if we undefine some $H_\sigma$ we terminate our canonical search in any extensions of $\sigma$. This completes the construction of $c$.
	
	\medskip
	\emph{Construction of $c_j$.} We first define an ordered sequence of tree enumerations, $\seq{\widehat{U}_m : m \in \omega}$. At the beginning of $s$, fix the largest $m$ such that we have defined $\widehat{U}_m(0)$ (necessarily to be $\{\emptyset\}$), and let $x_0 > 0$ be least with $\widehat{U}_m(x_0)$ still undefined. Let $l$ be such that $U_l$ looks infinite at stage $s$.
	In the construction of $c$ at this stage, we either begin, or are in the midst of, the canonical search for a $\seq{\varphi_{\sigma,0},\ldots,\varphi_{\sigma,k-1}}$-forest for each terminal $\sigma$ on $U_l$ that looks extendible. We assume the search has not found a new $\varphi_j$-tree inside the range of each such $\sigma$ since we started defining $\widehat{U}_m$. We then let $\widehat{U}_m(x_0)$ be undefined until one of the following conditions applies at some stage $t \geq s$. We divide the first condition in two depending as $j = 0$ or $j > 0$.
	\begin{enumerate}
		\item ($j=0$) $U_l$ has enumerated new terminal nodes that look extendible, and for at least one such node $\sigma$ the canonical search has found no new $\varphi_{\sigma,j}$-tree.
		\item[(1)] ($j>0$) $U_l$ has enumerated new terminal nodes that look extendible, for each such node $\sigma$ the canonical search has found a new $\varphi_{\sigma,j-1}$-tree, and for at least one such node $\sigma$ the canonical search has found no new $\varphi_{\sigma,j}$-tree.
		\item The canonical search has found a new $\varphi_j$-tree in the range of each terminal node in $U_l$ that looks extendible.
		\item The canonical search terminates (successfully or unsuccessfully) for each terminal node in $U_l$ that looks extendible.
	\end{enumerate}
	
	If the first condition applies with $j = 0$, fix any such $\sigma$ for which the canonical search has found no new $\varphi_{\sigma,j}$-tree. If $x_0 = 1$, let $\widehat{U}_m(x_0)$ enumerate $\sigma(|\sigma|-1)$ as a string of length $1$. Otherwise, assume inductively that $\widehat{U}_m(x_0 -1)$ enumerated the string $\sigma(z)\cdots\sigma(|\sigma|-2)$ for some number $z$, and let $\widehat{U}_m(x_0)$ enumerate $\sigma(z)\cdots\sigma(|\sigma|-2)\sigma(|\sigma|-1)$. By induction, if $\widehat{U}_m$ turns out to be infinite, then every path through $\widehat{U}_m$ will be a co-initial segment of some path through $U_l$. In particular, the range of any path through $\widehat{U}_m$ has range contained in the range of some path through $U_l$
	
	If the first condition applies with $j > 0$, consider any $\sigma$ for which the canonical search has found no new $\varphi_{\sigma,j}$-tree, and let $T$ be the least new $\varphi_{\sigma,j-1}$-tree that has been found. If $x_0 = 1$, let $\widehat{U}_m$ enumerate every element of $\ran(T)$. If $x_0 > 1$, assume inductively that there is a $\sigma' \prec \sigma$ such that $\widehat{U}_m(x_0 - 1)$ enumerated some string $\tau$ with $\tau(|\tau|-1)$ in a $\varphi_{\sigma',j-1}$-tree $T' < T$. For each such $\tau$ and each $x \in \ran(T)$, let $\widehat{U}_m(x_0)$ enumerate the string $\tau x$. In other words, for the longest initial segment of $\sigma$ that $\widehat{U}_m$ already enumerated some string $\tau$ for, it now enumerates all $1$-extensions of $\tau$ with last bit from $\ran(T)$. By induction, it follows that each string $\tau$ that $\widehat{U}_m$ enumerates on behalf of some $\sigma$ has $\ran(\tau)$ contained in a $\varphi_{\sigma,j-1}$-tree and hence in $\ran(\sigma)$. Thus, if $\widehat{U}_m$ is infinite, every path through it has range contained in the range of some path through $U_l$.
	
	If the second or third condition applies, leave $\widehat{U}_m(x_0)$ undefined and define instead $\widehat{U}_{m+1}(0) = \{\emptyset\}$. So in particular, we never define $\widehat{U}_m$ on $x_0$ or any larger numbers.
	
	It is easy to see that the $\widehat{U}_m$ indeed form an ordered sequence of tree enumerations. Next, let $C'_0 = C_0 \cup \{j\}$, so that $|C - C_0'| = k-1$. Apply the inductive hypothesis to the coloring $d$, the sets $C$ and $C'_0$, and $\seq{\widehat{U}_m : m \in \omega}$ to obtain a coloring $c' : \omega \to k$, and for each $j' < k-1$, a coloring $c'_{j'} : \omega \to \#(k-2)$. Finally, let $c_j : \omega \to \#(k-1)$ be defined by
	\[
		c_j(x) = \seq{c'(x), c'_0(x), \ldots, c'_{k-2}(x)}.
	\]
	Note that every infinite almost-homogeneous set for $c_j$ is also almost-homogeneous for $c'$ and each of the $c'_{j'}$. This completes the construction.
	
	\medskip
	\emph{Verification.} Suppose $d$ is total and stable, $U_l$ is infinite, and almost all $x$ in the range of each path through $U_l$ limit to a color in $C - C_0$ under $d$. As in the $k = 1$ case, we may assume there is no path $P$ through $U_l$ whose range contains an infinite homogeneous set $H$ for $d$ such that $\Psi^H$ is not infinite.
	
	Now fix any $j < k$, and suppose one of the tree enumerations $\widehat{U}_m$ defined in the construction of $c_j$ is infinite as a tree. Then at each stage $s$ after we start defining $\widehat{U}_m$ there must be a terminal $\sigma$ that looks extendible in $U_l$ such that the canonical search for a $\seq{\varphi_{\sigma,0},\ldots,\varphi_{\sigma,k-1}}$-forest in the construction of $c$ has not yet terminated. Hence, there is a path $P$ through $U_l$ such that this is true of every $\sigma \prec P$ that is terminal in $U_l$ (and necessarily looks extendible) at such a stage $s$. As noted in the construction, the formulas $\varphi_{\sigma,0},\ldots,\varphi_{\sigma,k-1}$ do not change between such initial segments $\sigma$ of $P$ while the canonical search is ongoing, so we can write simply $\varphi_{P,j}$ in place of $\varphi_{\sigma,j}$. Also, the number of $\sigma' \preceq \sigma$ for which $H_{\sigma'}$ is defined cannot change for any such $\sigma$, since doing so terminates the canonical search along $P$. So if this number is $n$, then for any sufficiently long initial segment $\sigma$ of $P$ we have that $H_{\sigma_m,j}$ is defined for each $m < n$, and we can write simply $H_{P_m}$ in place of $H_{\sigma_m}$, and $u_{P_m}$ in place of $u_{\sigma_m}$. In particular, every element of $H_{P_m,j}$ actually limits to $j$ under $d$ (possibly trivially so, if the set is empty), because otherwise this set would be eventually undefined and the search terminated.
	
	Fix any path $Q$ through $\widehat{U}_m$, and recall the construction of $c_j$. If $j = 0$, then $Q$ is a co-initial segment of some path $P$ through $U_l$, inside whose range we find no $\varphi_{P,j}$-trees.
	If $j > 0$, then for some path $P$ through $U_l$, we have that $Q$ is a path through an infinite $\varphi_{P,j}$-generated subtree of an infinite $\varphi_{P,j-1}$-sequence.
	Either way, this means that no finite $F \subseteq \ran(Q)$ satisfies $\varphi_{P,j}$. Either way, this means that if $F \subseteq \ran(Q)$ is homogeneous for $d$ with color $j$ then there is no $x \geq \max_{m < n} u_{P_m}$ with $\Psi^{\bigcup_{m < n} H_{P_m,0} \cup F}(x) \downarrow = 1$. Now if infinitely many elements of $\ran(Q)$ limit to $j$ under $d$, then let $H$ be any infinite homogeneous set for $d$ with color $j$ that contains $\bigcup_{m < n} H_{P_m,j}$ and otherwise only contains elements of $\ran(Q)$. Then $H$ is a subset of $\ran(P)$ and $\Psi^H$ is bounded by $\max_{m < n} H_{P_m}$ and so is finite, contradicting our assumption above.
	
	We conclude that if $Q$ is any path through $\widehat{U}_m$, then almost all $x$ in $\ran(Q)$ limit to a color other than $j$ under $d$. Since $\ran(Q)$ is a subset of $\ran(P)$ for some path $P$ through $U_l$, this means that almost all $x$ in $\ran(Q)$ limit to one of $1,\ldots,k-1$, or equivalently, to a member of $C - C'_0$, as defined in the construction of $c_j$. But then we are precisely in the hypothesis of the lemma for $k-1$, and $d$ consequently has an infinite homogeneous set $H$, contained in the range of some infinite path through $\widehat{U}_m$ and hence some infinite path through $U_l$, such that $\Psi^H$ is not almost-homogeneous for $c_j$.
	
	Going forward, we may thus assume that no $\widehat{U}_m$ defined in the construction of any $c_j$ is infinite. Let $s_0$ be the least stage after which $U_l$ looks infinite and none of the $x$ that do not limit to a color in $C - C_0$ under $d$ look like they do. For every $n$, we claim there is a stage $s \geq s_0$ such that if $P$ is a path through $U_l$ and $\sigma \prec P$ is a terminal node of $U_l$ at stage $s$, then each $H_{\sigma_m,j}$ with $m < n$ has stabilized to some finite set $H_{P_m,j}$. Fix $n$ and assume the claim for all $m < n$, as witnessed by some $s \geq s_0$. We may assume $s$ is large enough so that no terminal $\sigma$ on $U_l$ that looks extendible is equal to $\sigma_{n-1}$.
	
	Suppose first that at all sufficiently large stages $t \geq s$, some (and hence every) terminal $\sigma$ on $U_l$ that looks extendible has $H_{\sigma,0},\ldots,H_{\sigma,k-1}$ undefined. By choice of $s_0$ and $l$, conditions 3 and 4 of the construction cannot apply at any such $t$, and whenever condition 2 applies it does not change how many initial segments $\sigma'$ with $H_{\sigma'}$ defined the terminal nodes that still look extendible have. Thus, as above, the formulas $\varphi_{\sigma,0},\ldots,\varphi_{\sigma,k-1}$ do not change between compatible strings $\sigma$ at these stages $t$. This means that if the canonical search ever finds a $\seq{\varphi_{\sigma,0},\ldots,\varphi_{\sigma,k-1}}$-forest at any of these stages but we do not define $H_{\sigma,0},\ldots,H_{\sigma,k-1}$, then the same forest is found again at the next stage, and also for any newly enumerated extensions of $\sigma$. But the only reason we might fail to define new sets when a forest is found is if for some $\sigma$ that looks extendible there is no finite $F$ in the range of any terminal $\alpha$ in any $\varphi_{\sigma,j}$-tree such that $F$ satisfies $\varphi_{\sigma,j}$ and every element of $F$ looks it limits to $j$ under $d$. By Lemma~\ref{L:combcore} and the fact that $U_l$ is finitely-branching, this is impossible.
	
 	It follows that at any of these stages $t$, there is a terminal $\sigma$ in $U_l$ that looks extendible such that the canonical search has not found a $\seq{\varphi_{\sigma,0},\ldots,\varphi_{\sigma,k-1}}$-forest. (That is, $\sigma$ witnesses that condition 1 of the construction does not apply at $t$.) There are two cases that can cause this situation.
 	
 	Case 1: at any stage $t$ as above, there is a terminal $\sigma$ in $U_l$ that looks extendible such that the canonical search does not find a new $\varphi_{\sigma,0}$-tree. Let $m$ be such that at the least stage $t$ as above, we are defining $\widehat{U}_m$ in the construction of $c_0$. Then it is easy to see that $\widehat{U}_m$ is infinite.
 	
 	Case 2: for some $j > 0$, there are infinitely many stages $t$ as above such that the canonical search finds a new $\varphi_{\sigma,j-1}$-tree for each terminal $\sigma$ in $U_l$ that looks extendible, but at each such $t$ there is at least one $\sigma$ such that this search does not find a new $\varphi_{\sigma,j}$-tree. Let $m$ be such that at the least stage $t$ as above, we are defining $\widehat{U}_m$ in the construction of $c_j$. Then $\widehat{U}_m$ is infinite.
 	
 	Since both cases result in contradictions, we conclude that there are infinitely many stages after $s$ at which the $H_\sigma$ for $\sigma$ terminal in $U_l$ and looking extendible are defined. For any path $P$ through $U_l$ and any $\sigma \prec P$, such an $H_{\sigma,j}$ can later only be undefined if some element of it starts looking like it does not limit to $j$ under $d$. Since the $H_{\sigma,j}$ are always chosen from a certain $\seq{\varphi_{\sigma,0},\ldots,\varphi_{\sigma,k-1}}$-forest, it follows by Lemma~\ref{L:combcore} and the fact that $d$ is stable that this can only happen finitely often. Hence, this definition eventually stabilizes.
	
	To complete the proof, fix any path $P$ through $U_l$. Let $\sigma_0 \prec \sigma_1 \prec \cdots \prec P$ be the strings for which we just showed that the sets $H_{\sigma_n,j}$ stabilize to $H_{P_n,j}$. For each $n$, there is a unique $j < k$ such that $H_{P_n,j} \neq \emptyset$. Hence, for each $i < k + 1$, there is a $j < k$ such that $H_{P_n,j} \neq \emptyset$ for infinitely many $n \equiv i \mod (k+1)$; let $j_i$ be the least such $j$. Then we can fix $i < i' < k+1$ and $j$ with $j_i = j_{i'} = j$. Since $H_{P_n,j} \neq \emptyset$ for infinitely many $n$, and since $H_{P_m,j} < H_{P_n,j}$ whenever $m < n$ and both sets are non-empty, it follows that $H = \bigcup_n H_{P_n,j}$ is infinite. In particular, $H$ is an infinite homogeneous set for $d$ with color $j$. Furthermore, whenever some $H_{P_n,j}$ is non-empty there is an $x \geq \max_{m < n} u_{P_m}$ such that $\Psi^{\bigcup_{m < n} H_{P_m,0} \cup F}(x) \downarrow = 1$ and $c(x) = n \mod (k+1)$. Note that $\Psi^H$ agrees with this computation by construction. As there are infinitely many $n$ with $H_{P_n,j} \neq \emptyset$ and $n \equiv i \mod (k+1)$, and infinitely many $n$ with $H_{P_n} \neq \emptyset$ and $n \equiv i' \mod (k+1)$, we conclude that there are infinitely many $x$ in $\Psi^H$ with $c(x) = i$, and infinitely many $x$ with $c(x) = i'$. Hence, $\Psi^H$ is not almost-homogeneous for $c$, as was to be shown. This completes the proof.
\end{proof}

\subsection{Consequences}

We can now give the the main result of this section, which is just a simplified version of the lemma just proved.

\begin{theorem}\label{T:nured_simple}
	Fix $k \geq 1$. Let $d$ be a partial computable coloring $[\omega]^2 \to k$ and let $\Psi$ be a Turing functional. There exists a computable coloring $e : \omega \to \#(k)$ such that if $d$ is total and stable, then it has an infinite homogeneous set $H$ for which $\Psi^H$ is not almost-homogeneous for $e$.	 Moreover, an index for $e$ can be obtained from $k$, an index for $d$ as a partial computable function, and an index for $\Psi$.
\end{theorem}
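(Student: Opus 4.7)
The plan is to obtain Theorem~\ref{T:nured_simple} as a direct packaging of Lemma~\ref{L:COHnuSRT} by instantiating it with the simplest possible choices of auxiliary data. Concretely, I would take $C = \{0,1,\ldots,k-1\}$ and $C_0 = \emptyset$, so that $|C - C_0| = k$ and the given $d:[\omega]^2 \to k$ is literally a coloring into $C$. Because $C_0$ is empty, the hypothesis ``$\lim_y d(x,y) \notin C_0$'' in Lemma~\ref{L:COHnuSRT} becomes vacuous, which means we do not need any nontrivial tree enumeration to isolate a suitable subset of $\omega$.

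For the sequence $\seq{U_l : l \in \omega}$, I would use the trivial choice in which $U_0(x) = \{\seq{0,1,\ldots,x-1}\}$ for all $x$ (a computable tree enumeration with a single infinite path whose range is all of $\omega$), and $U_l$ is totally undefined for each $l \geq 1$. This is readily seen to be an ordered sequence of tree enumerations (the ordering condition is vacuous), and $U_0$ is infinite. Applying Lemma~\ref{L:COHnuSRT} then produces a computable $c : \omega \to k+1$ and computable $c_j : \omega \to \#(k-1)$ for each $j < k$, with indices obtainable uniformly from $k$, canonical indices for $C_0$ and $C$ (which depend only on $k$), an index for the fixed $\seq{U_l}$, an index for $d$, and an index for $\Psi$.

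I would then define
\[
e(x) = \seq{c(x), c_0(x), \ldots, c_{k-1}(x)},
\]
viewed as a number in $(k+1) \cdot \#(k-1)^k = \#(k)$ via a fixed computable bijection; clearly $e$ is computable and an index for it is obtainable uniformly from indices for $c$ and the $c_j$. The key point is that any infinite almost-homogeneous set for $e$ is simultaneously almost-homogeneous for $c$ and for each $c_j$, since these are projections of the tuple $e(x)$.

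For the verification, assume $d$ is total and stable. The hypotheses of Lemma~\ref{L:COHnuSRT} are then met: $U_0$ is infinite and the limit condition is vacuous. The lemma thus supplies an infinite homogeneous set $H$ for $d$, contained in the range of the unique path through $U_0$ (that range being all of $\omega$), such that $\Psi^H$ is not an infinite almost-homogeneous set for $c$ nor for any $c_j$. By the observation above, $\Psi^H$ is therefore not almost-homogeneous for $e$. There is no substantive obstacle here; Lemma~\ref{L:COHnuSRT} has already done all the work, and the only mild care needed is in exhibiting a valid trivial tree enumeration and checking that the combined coloring $e$ still takes values in $\#(k)$.
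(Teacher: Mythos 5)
Your proposal is correct and takes essentially the same route as the paper: instantiate Lemma~\ref{L:COHnuSRT} with $C_0 = \emptyset$ and a trivial sequence of tree enumerations whose only infinite member is the single-branch tree with range $\omega$, then combine $c$ and the $c_j$ into a product coloring into $\#(k) = (k+1)\cdot\#(k-1)^k$ colors. The only cosmetic differences are that you fix a concrete trivial tail $U_l$ for $l \geq 1$ rather than appealing to an unspecified one, and you write $C = \{0,\ldots,k-1\}$ (which is what is actually needed, since the lemma requires $|C - C_0| = k$, so the paper's "$C = \{0,\ldots,k\}$" appears to be a slip).
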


\begin{proof}
	Let $\seq{U_l : l \in \omega}$ be any uniform sequence of tree enumerations with $U_0(x) \downarrow = \{\omega \res x\}$ for all $x$, and let $C = \{0,\ldots,k\}$ and $C_0 = \emptyset$. Then, apply Lemma~\ref{L:COHnuSRT} to $k$, $C_0$ and $C$, $d$, and $\seq{U_l : l \in \omega}$ to get colorings $c : \omega \to k+1$ and, for each $j < k$, $c_j : \omega \to \#(k-1)$. Define $e : \omega \to \#(k)$ by
	\[
		e(x) = \seq{c(x),c_0(x),\ldots,c_{k-1}(x)}.\qedhere
	\]
\end{proof}

\begin{corollary}
	There exists a computable family of sets $\vec{X}$ with the following property. For every $k \geq 1$, if $d$ is a computable stable coloring $[\omega]^2 \to k$ and $\Psi$ is any Turing functional, then $d$ has an infinite homogeneous set $H$ such that $\Psi^H$ is not an infinite $\vec{X}$-cohesive set.
\end{corollary}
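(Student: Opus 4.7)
The plan is to leverage Theorem~\ref{T:nured_simple} to produce, uniformly in a parameter, a countable family of finite-range colorings, and then encode all of their fibers into a single computable family $\vec{X}$. The key conceptual point is that an infinite set $Y$ being $\vec{X}$-cohesive is (strictly) stronger than $Y$ being almost-homogeneous for any coloring whose fibers appear among the $X_n$. Indeed, if $e : \omega \to c$ is a coloring with fibers $e^{-1}(0), \dots, e^{-1}(c-1)$ all listed in $\vec{X}$, and $Y$ is infinite and $\vec{X}$-cohesive, then because these fibers partition $\omega$, some fiber $e^{-1}(j)$ has infinite intersection with $Y$; cohesiveness with respect to $e^{-1}(j)$ then forces $Y \setminus e^{-1}(j)$ to be finite, so $Y$ is almost-homogeneous for $e$ with color $j$.

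With this observation in hand, here is the construction. Fix a standard enumeration $\seq{\Psi_i : i \in \omega}$ of Turing functionals, and for each $k \geq 1$ a standard enumeration of indices $e$ for partial computable two-place functions into $k$, representing candidate colorings $[\omega]^2 \to k$. For each triple $(k,e,i)$, apply Theorem~\ref{T:nured_simple} to $k$, the index $e$, and $\Psi_i$ to obtain a computable coloring $\hat{c}_{k,e,i} : \omega \to \#(k)$; by the moreover clause of that theorem, an index for $\hat{c}_{k,e,i}$ can be computed uniformly from $(k,e,i)$. Let $\vec{X} = \seq{X_n : n \in \omega}$ be a computable enumeration of all the sets $\hat{c}_{k,e,i}^{-1}(j)$ for $k \geq 1$, $e,i \in \omega$, and $j < \#(k)$. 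Uniform computability of the $\hat{c}_{k,e,i}$ makes this a computable family.

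For the verification, given any $k \geq 1$, any computable stable coloring $d : [\omega]^2 \to k$, and any functional $\Psi$, pick indices $e$ and $i$ with $d = \varphi^{(k)}_e$ and $\Psi = \Psi_i$. By Theorem~\ref{T:nured_simple}, $d$ admits an infinite homogeneous set $H$ for which $\Psi^H$ is not almost-homogeneous for $\hat{c}_{k,e,i}$. If $\Psi^H$ were an infinite $\vec{X}$-cohesive set, the observation from the first paragraph applied to $\hat{c}_{k,e,i}$ (whose fibers are all among the $X_n$) would yield that $\Psi^H$ is almost-homogeneous for $\hat{c}_{k,e,i}$, contradicting the choice of $H$.

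There is no real obstacle here beyond bookkeeping: the substantive work is entirely encapsulated in Theorem~\ref{T:nured_simple}, and the only things to check are the uniformity required to make $\vec{X}$ computable and the elementary equivalence between cohesiveness with respect to a finite partition and almost-homogeneity for the corresponding coloring.
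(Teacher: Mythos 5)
Your proof is correct and essentially the same as the paper's: both run Theorem~\ref{T:nured_simple} over all triples (number of colors, index of coloring, index of functional), collect the resulting finite-range colorings into a single computable family, and use the observation that an infinite set cohesive for an encoding of a finite-range coloring must be almost-homogeneous for it. The only difference is the encoding — you place the fibers $\hat{c}_{k,e,i}^{-1}(j)$ directly into $\vec{X}$, whereas the paper uses the binary digits of $e_i(x)$ — and these are interchangeable bookkeeping choices.
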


\begin{proof}
	By the uniformity of Theorem~\ref{T:nured_simple}, there is a computable sequence $\seq{e_i : i \in \omega}$ such that if $i$ is the triple (of indices for) $\seq{k,d,\Phi}$, then $e_i$ is the coloring $e : \omega \to \#(k)$ given by the theorem. We define a family of sets $\vec{X} = \seq{X_n : n \in \omega}$ as follows. Fix $i = \seq{k,d,\Phi}$, and for each $s$, let $A_{\seq{i,s}}(x)$ be the $s$th digit in the binary expansion of $c_i(x)$, regarded as a sequence of length $\lfloor \log_2 \#(k) \rfloor + 1$ by prepending $0$s if necessary, or $0$ if $s \geq \lfloor \log_2 \#(k) \rfloor + 1$. (For example, if $k = 2$ then $\#(k) = 12$, so if $e_i(x) = 3$ then $A_{\seq{i,0}}(x),A_{\seq{i,1}}(x),\ldots$ equal $0,0,1,1,0,0,0,\ldots$, respectively; if $e_i(x) = 9$ then $A_{\seq{i,0}}(x),A_{\seq{i,1}}(x),\ldots$ equal $1,0,0,1,0,0,0,\ldots$; etc.) Then every infinite set which is cohesive for $\seq{A_{\seq{i,s}} : s \in \omega}$ is an infinite almost-homogeneous set for $e_i$. So by definition of $e_i$, if $d$ is total and stable then it must have an infinite homogeneous set $H$ with $\Psi^H$ not equal to any infinite $\vec{X}$-cohesive set.
\end{proof}

Observe above the dependence of $H$ on $\Psi$. If this dependence could be eliminated, which is to say, if in the statement of the corollary we could interchange the universal quantifier over $\Psi$ with the existential quantifier over $H$, we would have that $\COH \ncred \SRT^2_{<\infty}$. This remains an open question. We do however have the following consequence as a special case.

\begin{corollary}\label{C:COHnuredSRT2}
	$\COH \nured \SRT^2_{<\infty}$.	
\end{corollary}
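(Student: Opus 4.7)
The plan is to argue by contradiction directly from the preceding corollary, exploiting the fact that its witness $\vec{X}$ is computable and that its conclusion is stated uniformly in $d$, $k$, and $\Psi$. Suppose for contradiction that $\COH \ured \SRT^2_{<\infty}$ via Turing functionals $\Phi$ and $\Psi$; so for every family $\vec{Y}$, the object $\Phi^{\vec{Y}}$ is a stable coloring $[\omega]^2 \to k$ for some $k$ (which can be read off from $\Phi^{\vec{Y}}$, since an instance of $\SRT^2_{<\infty}$ specifies its number of colors), and for every infinite homogeneous set $H$ for $\Phi^{\vec{Y}}$, the set $\Psi^{\vec{Y} \oplus H}$ is an infinite $\vec{Y}$-cohesive set.

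Let $\vec{X}$ be the computable family of sets produced by the preceding corollary. Since $\vec{X}$ is computable, so is $d := \Phi^{\vec{X}}$, which is therefore a computable stable coloring $[\omega]^2 \to k$ for some particular $k \geq 1$ (recovered uniformly from $\Phi^{\vec{X}}$). Since $\vec{X}$ is computable, there is a Turing functional $\widetilde{\Psi}$ such that $\widetilde{\Psi}^{H} = \Psi^{\vec{X} \oplus H}$ for every oracle $H$; an index for $\widetilde{\Psi}$ is obtained in a uniform way from an index for $\Psi$ together with a computable index for $\vec{X}$.

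Now apply the preceding corollary to this $d$, $k$, and the functional $\widetilde{\Psi}$. It delivers an infinite homogeneous set $H$ for $d = \Phi^{\vec{X}}$ such that $\widetilde{\Psi}^{H}$ is not an infinite $\vec{X}$-cohesive set. But $\widetilde{\Psi}^{H} = \Psi^{\vec{X} \oplus H}$, so this contradicts the assumed uniform reduction, which requires $\Psi^{\vec{X} \oplus H}$ to be an infinite $\vec{X}$-cohesive set. Hence no such functionals $\Phi, \Psi$ exist, establishing $\COH \nured \SRT^2_{<\infty}$.

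The only point that needs a moment's care is the reconciliation between the ``for some $k$'' in the reducibility (which lets $\Phi^{\vec{X}}$ be a coloring into however many colors it likes) and the ``for every $k$'' in the preceding corollary; but since the corollary quantifies universally over $k$, we simply instantiate it at the particular $k$ associated to $\Phi^{\vec{X}}$. There is no real obstacle here, which reflects the fact that all the work has already been done in Theorem~\ref{T:nured_simple} and its corollary: this final statement is essentially a repackaging of that uniform, functional-dependent construction into the syntactic form of Definition~\ref{D:reductions}(3).
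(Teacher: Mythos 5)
Your proof is correct and follows exactly the intended argument: since $\vec{X}$ is computable, $d = \Phi^{\vec{X}}$ is a computable stable coloring into some fixed $k$, the backward functional absorbs $\vec{X}$ into a functional $\widetilde{\Psi}$ with $\widetilde{\Psi}^H = \Psi^{\vec{X}\oplus H}$, and the preceding corollary (applied to $d$, $k$, $\widetilde{\Psi}$) produces an $H$ defeating the putative reduction. This is precisely the reasoning the paper intends — the corollary is stated without an explicit proof because it is this immediate specialization — so there is nothing to add.
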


We note also the following direct consequence of Theorem~\ref{T:nured_simple}, which is of independent interest.

\begin{corollary}
	For all $k \geq 1$, we have that $\RT^1_{\#(k)} \nured \SRT^2_k$.
\end{corollary}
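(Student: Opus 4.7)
The plan is to derive a contradiction from the assumption that $\RT^1_{\#(k)} \ured \SRT^2_k$, using Theorem~\ref{T:nured_simple} together with a recursion-theoretic fixed-point argument to simultaneously defeat both of the reducing functionals. So suppose $\RT^1_{\#(k)} \ured \SRT^2_k$ via Turing functionals $\Phi$ and $\Psi$, so that for every coloring $e : \omega \to \#(k)$, $\Phi^e$ is a stable coloring $[\omega]^2 \to k$, and for every infinite homogeneous set $H$ for $\Phi^e$, $\Psi^{e \oplus H}$ is an infinite homogeneous set for $e$.

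The construction proceeds as follows. Given any index $i$, write $e_i = \varphi_i$, regard $\Phi^{e_i}$ as a partial computable coloring $[\omega]^2 \to k$ (with an index obtainable from $i$ and $\Phi$), and define a Turing functional $\Psi_i$ by $\Psi_i^H = \Psi^{e_i \oplus H}$ (with an index obtainable from $i$ and $\Psi$). Applying Theorem~\ref{T:nured_simple} to $\Phi^{e_i}$ and $\Psi_i$ yields, uniformly in $i$, an index $f(i)$ for a computable coloring $\omega \to \#(k)$ with the property that if $\Phi^{e_i}$ is total and stable, then it has an infinite homogeneous set $H$ for which $\Psi_i^H = \Psi^{e_i \oplus H}$ is not almost-homogeneous for $\varphi_{f(i)}$.

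Since $f$ is a total computable function on indices, the recursion theorem furnishes an $n$ with $\varphi_n = \varphi_{f(n)}$; set $e = \varphi_n$, which is then a total computable coloring $\omega \to \#(k)$. By the assumed reducibility, $\Phi^e$ is a stable coloring, so Theorem~\ref{T:nured_simple} applied with input index $n$ delivers an infinite homogeneous set $H$ for $\Phi^e$ with $\Psi^{e \oplus H}$ not almost-homogeneous for $\varphi_{f(n)} = e$. In particular, $\Psi^{e \oplus H}$ fails to be an infinite homogeneous set for $e$, contradicting the reduction.

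The only subtlety is verifying that $f$ really is total, so that the recursion theorem can be invoked; this is exactly what the moreover-clause of Theorem~\ref{T:nured_simple} provides, since it furnishes an index for $\varphi_{f(i)}$ from indices for $\Phi^{e_i}$ (as a partial computable function) and $\Psi_i$ (as a functional) with no prior assumption that these data define a valid instance of $\SRT^2_k$ or a valid reduction. The rest of the argument is essentially routine bookkeeping of indices, with Theorem~\ref{T:nured_simple} carrying all the combinatorial weight.
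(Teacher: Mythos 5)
Your proof is correct, and it is the natural reading of the paper's unproved claim that this is a ``direct consequence'' of Theorem~\ref{T:nured_simple}: the recursion theorem, applied via the theorem's uniformity (``moreover'') clause, is precisely the tool needed to close the circular dependence between the instance $e$ and the induced coloring $\Phi^e$ and functional $\Psi^{e\oplus\cdot}$. Your remark that totality of $f$ rests on the uniformity clause furnishing an index unconditionally (i.e., without assuming $\Phi^{e_i}$ is total or stable) is exactly the right point to check.
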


\noindent We do not know if this can be improved to show that $\RT^1_k \nured \SRT^2_j$ whenever $j < k$. This is the uniform analogue of Question 5.4 of Hirschfeldt and Jockusch~\cite{HJ-TA}, whether $\RT^1_k \scred \SRT^2_j$. Of course, $\RT^1_k$ is strongly uniformly reducible even to $\D^2_k$, and hence to $\SRT^2_k$.

We mention, in closing this section, that we do not know how to extend Corollary~\ref{C:COHnuredSRT2} from uniform reducibility to generalized uniform reducibility (as defined in~\cite{HJ-TA}, Definition 4.3). Whether this can be done appears also as Question 5.2 of~\cite{HJ-TA}. 

\section{$\COH$ and strong computable reducibility}\label{S:COH_strong}

For our final result, we turn to strong computable reducibility. In trying to show that, say, $\mathsf{Q} \nscred \mathsf{P}$, one may hope to be able to keep the construction of a witnessing instance $X$ of $\mathsf{Q}$ separate from the construction of a solution $\widehat{Y}$ to the computed instance $\Phi^X$ of $\mathsf{P}$. This is because the ``backward'' reduction $\Psi^{\widehat{Y}}$ does not reference $X$, and so one can try to build a bit of $\widehat{Y}$, then build a bit more of $X$ to diagonalize $\Psi^{\widehat{Y}}$, and then repeat for the next reduction procedure. This is indeed what happens for example in the proof that $\COH \nscred \D^2_2$ in~\cite{Dzhafarov-2015}, which builds a limit-homogeneous set independently of a family of sets. Unfortunately, here our $\mathsf{P}$ is $\SRT^2_2$ rather than $\D^2_2$, so we must build a homogeneous set rather than merely a limit-homogeneous one. This necessarily brings the construction of the original instance $X$ into the construction of the solution $\widehat{Y}$. To be specific, we can try to define a forest of some sort as we did above, then extend the coloring to force the limiting color of each number in the range of this forest, and appeal to Lemma~\ref{L:combcore} to conclude that the range of some path in some tree in this forest is limit-homogeneous. But there is no guarantee that this range is also homogeneous, let alone homogeneous with the same color as the limiting color. If we instead try to do this in the opposite order, by first defining a finite set with certain desirable properties (like causing $\Psi$ to converge on a new element), we can later extend the coloring to make this finite set homogeneous, but then there may be no further extension that causes all elements of this set to have the same limit.

The above is a serious obstacle. To get around it, we abandon the generalized Seetapun framework from Section~\ref{S:gencore}, and instead present the following alternative argument, which uses an entirely new method for building homogeneous sets. The drawback is that, unlike in our results above, where we constructed instances that were computable or close to computable, here our instance is far more complicated. We begin with a definition.

\begin{definition}
Let $\mathbb{C}$ be the following notion of forcing. A condition is a sequence $p = \seq{\sigma^p_0,\ldots,\sigma^p_{n^p-1},\ell^p}$, in which $\sigma^p_0,\ldots,\sigma^p_{n^p-1}$ are finite binary sequences of the same length, and $\ell^p$ is a function $n^p \to (2 \times \omega) \cup \{u\}$ such that if $\ell^p(n) = \seq{i,k}$ then $\sigma^p_n(x) = i$ for all $x$ with $k \leq x < |\sigma^p_{n}|$. A condition $q$ extends $p$ if $n^q \geq n^p$, $\sigma^q_n \succeq \sigma^p_n$ for all $n < n^p$, and $\ell^q \supseteq \ell^p$.
\end{definition}

\noindent We call each $\sigma^p_n$ a \emph{column} of $p$. The role of $\ell^p$ is to either \emph{lock} a column to some $i < 2$ from some point $k$ onward, or to \emph{unlock} it, meaning that it can never later be locked. Note that any sufficiently generic filter $\mathscr{G}$ for $\mathbb{C}$ gives rise to a family $\vec{X} = \seq{X_n : n \in \omega}$ of sets, with the $n$th column of any condition in $\mathscr{G}$ being an initial segment of $X_n$. More generally, we say $\vec{X}$ \emph{extends} $p$ provided each of $A_0,\ldots,A_{n^p-1}$ extend $\sigma^p_0,\ldots,\sigma^p_{n^p-1}$, respecting all locks. This means that if $\vec{X}$ extends $p$, which has locked its $n$th column to $i$ from $k$ on, then $X_n(x) = i$ for all $x \geq k$.

In what follows, we use $\vec{X}$ (respectively, $X_n$ for some $n \in \omega$) both for a family of sets we are building (respectively, for its $n$th column) and as a name in the $\mathbb{C}$ forcing language for a generic family (respectively, a name for the $n$th column of a generic family). We define what it means for a condition to force an arithmetical statement relative to this name in the obvious way: thus, we say $p$ forces $x \in X_n(x)$ if $x < |\sigma^p_n|$ and $\sigma^p_n(x) = 1$, and from there we proceed inductively in the usual manner. (See e.g., Shore~\cite[Chapter 3]{Shore-TA} for details.) Given any set $P$, we can also extend our forcing language and relation to include $P$ as a parameter.

\begin{theorem}\label{T:nscred}
There exist a family of sets $\vec{X} = \seq{X_n : n \in \omega}$ and a collection $Y$ of infinite sets such that no $(\vec{X} \oplus P)$-computable infinite set is $\vec{X}$-cohesive for any $P \in Y$, and every stable coloring of pairs computable from $\vec{X}$ either has an $(\vec{X} \oplus P)$-computable infinite homogeneous set for some $P \in Y$, or else, for each $j < 2$, an infinite homogeneous set $H_j$ with color $j$ that computes no infinite $\vec{X}$-cohesive set.
\end{theorem}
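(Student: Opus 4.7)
The plan is to construct $\vec{X}$ by forcing with $\mathbb{C}$, building a descending sequence of conditions $p_0 \geq p_1 \geq \cdots$ while simultaneously building $Y$ as a countable increasing union $Y_0 \subseteq Y_1 \subseteq \cdots$. Enumerate in a single $\omega$-sequence all the relevant requirements. \emph{Cohesion requirements} $R^{\mathrm{coh}}_{e,\vec{P}}$ assert that $\Phi_e^{\vec{X} \oplus \vec{P}}$ is not an infinite $\vec{X}$-cohesive set, indexed by a Turing functional index $e$ and a finite tuple $\vec{P}$ from the eventual $Y$. \emph{Homogeneity requirements} $R^{\mathrm{hom}}_e$ assert that if $d = \Phi_e^{\vec{X}}$ is a stable $2$-coloring of pairs then either (A) some $P \in Y$ and some $H \leq_T \vec{X} \oplus P$ witness that $d$ has a homogeneous set computable from $\vec{X} \oplus P$, or (B) for each $j<2$, $d$ has an infinite color-$j$ homogeneous set computing no $\vec{X}$-cohesive set.

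To meet $R^{\mathrm{coh}}_{e,\vec{P}}$, exploit the flexibility of unlocked columns. At the assigned stage with current condition $p$, choose a column index $n$ whose column is still unlocked in $p$ and which is larger than any column referenced by short-use computations of $\Phi_e$ relative to $p$. Extend $p$ by appending alternating $0$s and $1$s at new positions of the $n$th column, forcing $\Phi_e^{\vec{X} \oplus \vec{P}}$, if total and infinite, to meet both $X_n$ and $\overline{X_n}$ infinitely often. For this to work throughout the construction, maintain the invariant that each helper $P$ ever added to $Y$ is determined from $\vec{X}$ through only boundedly many of its columns, so that later helpers cannot pre-commit an unlocked column's alternation pattern.

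To meet $R^{\mathrm{hom}}_e$ at stage $s$ below $p_s$, consult the next iterate of the hyperjump (relative to all parameters accumulated so far) to decide the $\Sigma^1_1$ question $(\ast)$: does there exist $q \leq p_s$ such that for each $j<2$ there is an infinite color-$j$ homogeneous set $H_j$ for $\Phi_e^{\vec{X}}$ such that $\Psi^{H_j}$ is not an infinite $\vec{X}$-cohesive set for any Turing functional $\Psi$? If $(\ast)$ holds, descend to such a $q$; sufficiently generic extensions of $q$ preserve the $H_j$ and secure outcome (B). If $(\ast)$ fails, use the $\Pi^1_1$ failure to extract a Skolem-style helper $P$ from which, together with $\vec{X}$, one may compute an infinite homogeneous set for $d$, add $P$ to $Y$, and thereby secure (A). By design, $P$ depends on $\vec{X}$ only through the columns already mentioned in $p_s$, so it remains compatible with future cohesion diagonalizations carried out on fresh columns.

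The main obstacle, as emphasized in the informal discussion preceding the theorem, is that building a genuinely homogeneous set $H$ for $d = \Phi_e^{\vec{X}}$ intertwines the construction of $H$ with that of $\vec{X}$: each new element of $H$ commits $d$ on new pairs, which requires non-trivial extensions of the $\mathbb{C}$-condition. Unlike the tree-by-tree search of Section~\ref{S:gencore}, the present approach collapses the combinatorial problem into a single hyperarithmetic query per stage, bypassing the incompatibility between locking columns and preserving homogeneity. The delicate verification is that helpers extracted in the ``no'' branches of $(\ast)$ do not themselves compute $\vec{X}$-cohesive sets; this is why we use a strictly stronger iterate of the hyperjump at each stage, so that each successive decision dominates all previously added helpers, and why we insist that every helper references only boundedly many columns of $\vec{X}$.
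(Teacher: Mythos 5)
Your proposal captures the high-level scaffolding of the paper's argument -- forcing with $\mathbb{C}$, a descending condition sequence $p_0\geq p_1\geq\cdots$, an increasing family $Y_0\subseteq Y_1\subseteq\cdots$ of helper sets built ``on the side,'' and a countable tower of hyperjump-level decisions -- but both of your requirement strategies as written contain genuine gaps that the paper's actual mechanics are specifically designed to fill.

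For the cohesion requirements, the property you want (``$\Phi_e^{\vec{X}\oplus\vec{P}}$ meets both $X_n$ and $\overline{X_n}$ infinitely often'') is $\Pi^0_2$ in the limit, and cannot be forced by a single finite extension that writes alternating $0$s and $1$s into an unlocked column. You would have to return to the requirement infinitely often and also coordinate with the unknown future behavior of $\Phi_e$. The paper handles this not by direct diagonalization at a stage but by demanding that the sequence $p_0\geq p_1\geq\cdots$ be $3$-generic relative to each $P\in Y$; it then exhibits a $\Sigma^0_3(P)$-definable dense class $W$ of conditions forcing either ``$\Gamma^{\vec{X}\oplus P}$ is not infinite'' or the $\Pi^0_2$ statement about an unlocked column, and verifies density via a one-column locking/unlocking argument. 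Genericity makes this work uniformly as $Y$ grows, which is exactly what your scheme of bounding ``short-use computations'' at a single stage does not accomplish.

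The bigger gap is in the homogeneity requirements. Your $\Sigma^1_1$ query $(\ast)$ is not well posed: it quantifies over infinite sets $H_j$ ``homogeneous for $\Phi_e^{\vec{X}}$,'' but $\vec{X}$ is not determined by $q$, so the query must be a forcing statement, and even granting that, you have not explained how to extract from a positive answer actual sets $H_j$ that \emph{remain} homogeneous as the condition sequence continues to extend $q$ and $\vec{X}$ keeps being revealed. The sentence ``sufficiently generic extensions of $q$ preserve the $H_j$'' is precisely the content that needs an argument; it is the obstacle the author flags in the preamble to Section~\ref{S:COH_strong}: fixing a finite fragment of $H$ and later extending the coloring may leave no extension that both keeps that fragment homogeneous and gives its elements a common limit color. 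The paper's solution is an explicit incremental construction: separate $\mathcal{Q}$-requirements grow $H^\Phi_0$ and $H^\Phi_1$ one element at a time while simultaneously extending the condition to commit $\lim_y\Phi^{\vec{X}}(x,y)$, and $\mathcal{R}$-requirements perform the diagonalization against cohesion by building a tree $T$ inside the current reservoir $I_s$, labeling its nodes by an ordinal-rank induction with values in $\omega\cup\{\infty\}$, extracting a homogeneous-by-labels subtree $T_0$, and then walking down $T_0$ while extending the condition in lockstep, forcing at each step that $\ran(\alpha_n)$ is homogeneous with color $j$. Each failure to continue the walk produces a concrete set $P$ (either a set of pairs $\seq{x,w}$ used with a new dense class $W_e$, or a limit-homogeneous tail) to add to $Y$, not an abstract Skolem witness. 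Your plan of ``one hyperarithmetic query per stage'' collapses this machinery, but the reason it is there is exactly to make preservation and the extraction of $P$ concrete; without it, the central difficulty of $\COH\nscred\SRT^2_2$ (as opposed to $\D^2_2$) is not addressed.

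Finally, your invariant that each helper $P$ ``is determined from $\vec{X}$ through only boundedly many of its columns'' plays no role in the paper and is not what keeps the cohesion requirements working as $Y$ grows; that job is done by $3$-genericity relative to each $P$. Your intuition about using higher hyperjumps so that later decisions ``dominate'' earlier helpers is directionally reasonable (the paper's construction is indeed highly non-arithmetic and the question of lowering this is posed as an open problem), but it is genericity over $P$, not a hyperjump ordering, that secures the cohesion side.
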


\begin{proof}
	The idea is to make $\vec{X}$ generic over the collection $Y$, where $Y$ will be obtained by repeatedly taking paths through certain non-well-founded trees of $\omega^{<\omega}$. We define
	\begin{itemize}
	\item a sequence of $\mathbb{C}$-conditions $p_0 \geq p_1 \geq \cdots$ with $\lim_s n^{p_s} = \infty$;
	\item a sequence of finite sets $H^\Phi_{j,0} \subseteq H^\Phi_{j,1} \subseteq \cdots$ for each Turing functional $\Phi$ and each $j < 2$;
	\item a sequence of infinite sets $I_0 \supseteq I_1 \supseteq \cdots$ with $H_{j,s} < I_s$ for each $j$ and all $s$;
	\item a sequence of finite families $Y_0 \subseteq Y_1 \subseteq \cdots$ of infinite subsets of $\omega$. 
	\end{itemize}
	In the end, we take $X_n = \bigcup_s \sigma^{p_s}_n$ for each $n$, and let $\vec{X} = \{X_n : n \in \omega\}$, let $H^\Phi_j = \bigcup_s H^\Phi_{j,s}$ for each $j < 2$, and let $Y = \bigcup_s Y_s$. Our goal is to ensure the following requirements, for all $s \in \omega$, all Turing functionals $\Phi$ and $\Psi$, and each $j < 2$:
	\begin{center}
		\begin{tabu}{llX}
			$\mathcal{P}_{s}$ & : & the sequence $p_0 \geq p_1 \geq \cdots$ is $3$-generic relative to each $P \in Y_s$;\\
			$\mathcal{Q}_{\Phi,s}$ & : & if $\Phi^{\vec{X}}$ is a stable coloring of pairs, it either has an $(\vec{X} \oplus P)$-computable infinite homogeneous set for some $P \in Y$, or both $H^\Phi_0$ and $H^\Phi_1$ are infinite;\\
			$\mathcal{R}_{\Phi,\Psi,i}$ & : & if $\Phi^{\vec{X}}$ is a stable coloring of pairs, it either has an $(\vec{X} \oplus P)$-computable infinite homogeneous set for some $P \in Y$, or if $\Psi^{H^\Phi_j}$ defines an infinite set then this set has infinite intersection with both $A_0$ and $\overline{A_0}$.
		\end{tabu}
	\end{center}
	The $\mathcal{P}$ requirements will ensure that for all $P \in Y$ there is no $(\vec{X} \oplus P)$-computable infinite $\vec{X}$-cohesive set. To see this, fix any $P \in Y$ and any Turing functional $\Gamma$. Let $W$ be the set of conditions $p$ that force one of the following two statements:
	\begin{itemize}
		\item $\Gamma^{\vec{X} \oplus P}$ does not define an infinite set;
		\item there is an $n \in \omega$ such that for every $k \in \omega$ and every $ \RADOMIL < 2$, there is an $x \geq k$ with $X_n(x) =  \RADOMIL$ and $\Gamma^{\vec{X} \oplus P}(x) \downarrow = 1$.
	\end{itemize}
	Then $W$ is $\Sigma^0_3$-definable in $P$, and we claim that it is dense in $\mathbb{C}$. Hence, some condition $p_s$ in our sequence must meet it, from which it follows that $\Gamma^{\vec{X} \oplus P}$ is either not an infinite set or not $\vec{X}$-cohesive, as desired. So let $p_0$ be any condition, and assume it has no extension forcing the first statement above. Let $q$ be any extension of $p$ with $n^q = n^p + 1$ and $\ell^q(n^p) = u$; we claim that $q$ forces the second statement, witnessed by $n = n^p$. Assume not, so that for some $k \in \omega$, some $ \RADOMIL < 2$, and some condition $r$ extending $q$, no extension of $r$ forces that there is an $x$ with $X_n(x) =  \RADOMIL$ and $\Gamma^{\vec{X} \oplus P}(x) \downarrow = 1$. Then in particular, there is no sequence of strings $\tau_0,\ldots,\tau_{b-1}$ as follows:
	\begin{enumerate}
		\item $b \geq n^r + 1$;
		\item each $\tau_m$ has the same length;
		\item each $\tau_m$ with $m < n^r$ extends $\sigma^r_n$ and respects any lock on this column;
		\item $\tau_{n^p}(x) =  \RADOMIL$ for all $x$ with $\max \{|\sigma^r_0|, k\} \leq x < |\tau_{n^p}|$;
		\item $\Gamma^{\seq{\tau_0,\ldots,\tau_{b-1}} \oplus I_t^{(e)} \res |\tau_0|}(x) \downarrow = 1$ for some $x$ with $\max \{|\sigma^r_0|, k\} \leq x < |\tau_{n^p}|$.
	\end{enumerate}
	Let $r'$ be the condition that differs from $r$ only in that $\ell^{r'}(n^p)$ is $\seq{\max \{|\sigma^r_0|, k\},j}$ rather than $u$. Then sequences of strings satisfying properties (1)--(4) above are precisely the sequences of columns of extensions of $r'$. Thus, the fact that no such sequence can also satisfy (5) means $r'$ forces that $\Gamma^{\vec{X} \oplus P}(x) \simeq 0$ for all $x \geq 0$, which is to say that this computation does not define an infinite set. Since $r'$ is an extension of $p$, this is impossible.
	
	It follows that the $\mathcal{Q}$ and $\mathcal{R}$ requirements ensure that every $\vec{X}$-computable stable coloring of pairs has an infinite homogeneous set that computes no infinite $\vec{X}$-cohesive set, as desired.
	
	\medskip
	\emph{Construction.} Distribute all the requirements between the stages of the construction in such a way that there are infinitely many stages dedicated to satisfying each requirement. We begin by letting $p_0$ be any condition with $n^{p_0} = 1$ and $\ell^{p_0}(0) = u$. Thus, we will be free to extend the $0$th column of any of our conditions $p_s$ arbitrarily. Let $H^\Phi_{0,0} = H^\Phi_{1,0} = \emptyset$ for all $\Phi$, let $I_0 = \omega$, and let $Y_0 = \emptyset$. At the beginning of stage $s+1$, assume we are given $p_s$, $H^\Phi_{0,s}$ and $H^\Phi_{1,s}$ for all $\Phi$, $I_s$, and $Y_s$. Assume inductively that if $H^\Phi_{0,s}$ or $H^\Phi_{1,s}$ is non-empty for some $\Phi$, then $p_s$ forces that $\Phi^{\vec{X}}$ is a stable coloring of pairs and $\Phi^{\vec{X}}(x,y) = j$ for all $x \in H^\Phi_{j,s}$ and all $y \in I_s$. At the conclusion of the stage, if we did not explicitly define $p_{s+1}$, $H^\Phi_{j,s+1}$ for some $\Phi$ or $j$, $I_{s+1}$, or $Y_{s+1}$, we mean to let these be $p_s$, $H^\Phi_{j,s}$, $I_s$, and $Y_s$, respectively.
	
	\medskip
	\emph{$\mathcal{P}$ requirements.} These are satisfied in a straightforward manner. Suppose $s$ is dedicated to requirement $\mathcal{P}_t$ for some $t < s$, and that it is the $\seq{n,m}$th such stage. If $n > |Y_t|$, do nothing. Otherwise, let $P$ be the $n$th member of the finite set $Y_t$ in some fixed listing, and let $W$ be the $m$th $\Sigma^0_3(P)$ set. If $p_s$ has an extension in $W$, choose one and let it be $p_{s+1}$, and otherwise do nothing.
	
	\medskip
	\emph{$\mathcal{Q}$ requirements.} Suppose $s$ is dedicated to $\mathcal{Q}_\Phi$. By extending $p_s$ if necessary, we may assume that it decides whether or not $\Phi^{\vec{X}}$ is a stable coloring of pairs. If $p_s$ forces that it is not such a coloring, we can do nothing. So assume otherwise. If, for some $j < 2$ and some $k \in \omega$, there is no extension of $p_s$ forcing that $\lim_y \Phi^{\vec{X}}(x,y) = j$ for some $x \geq k$, then $P = \{x \in I_s : x \geq k\}$ will be limit-homogeneous for $\Phi^{\vec{X}}$ with color $1-j$. Hence, $\vec{X} \oplus P$ will compute an infinite homogeneous set for $\Phi^{\vec{X}}$ via the uniform thinning algorithm for obtaining a homogeneous set from a limit-homogeneous one. We thus define $Y_{s+1} = Y_s \cup \{P\}$, and otherwise do nothing. If there are no $j$ and $k$ as above, we can find numbers $x_0,x_1 \in I_s$ and an extension of $p_s$ forcing that $\lim_y \Phi^{\vec{X}}(x_i,y) = j$ for each $j < 2$; we then let $p_{s+1}$ be this extension, and let $H^\Phi_{j,s+1} = H^\Phi_{j,s} \cup \{x_i\}$. Thus, we have added an element to each of $H^\Phi_0$ and $H^\Phi_1$, so if $\Phi^{\vec{X}}$ does not end up having an $(\vec{X} \oplus P)$-computable infinite homogeneous set for any $P \in Y$, both $H^\Phi_0$ and $H^\Phi_1$ will be infinite.
	
	\medskip
	\emph{$\mathcal{R}$ requirements.} Suppose $s$ is dedicated to $\mathcal{R}_{\Phi,\Psi,i}$. As above, we may assume $p_s$ forces that $\Phi^{\vec{X}}$ is a stable coloring of pairs. Let $k$ be the length of the columns of $p_s$, and define $T$ to be the tree of all $\alpha \in Inc(I_s)$ such that there is no finite $F \subseteq \ran(\alpha \res |\alpha| - 1)$ and no $w \geq k$ with $\Psi^{H^\Phi_{j,s} \cup F}(w) \downarrow = 1$. If $T$ is not well-founded, we let $I_{s+1}$ be the range of any infinite path through it. Then provided we ensure that all $x \in H^\Phi_j - H^\Phi_{j,s}$ come from $I_s$, any set defined by $\Psi^{H^\Phi_j}$ will contain no numbers $w \geq k$, and so will not be infinite.
	
	We thus turn to dealing with the case that $T$ is well-founded. Let $ \RADOMIL < 2$ be such that the number of prior stages dedicated to requirement $\mathcal{R}_{\Phi,\Psi,i}$ is congruent to $ \RADOMIL$ modulo $2$. Our strategy is to either define $H^\Phi_{j,s+1}$ so that $\Psi^{H^\Phi_{j,s+1}}(w) \downarrow = 1$ for some $w \geq k$ and ensure that $A_0(x) =  \RADOMIL$, or else to find a set $P$ such that $\Phi^{\vec{X}}$ has an $(\vec{X} \oplus P)$-computable infinite homogeneous set, and add this $P$ to $Y$.
	
	Notice each $\alpha \in T$ is either terminal, in which case there is an $F \subseteq \ran(\alpha)$ and a $w \geq k$ with $\Psi^{H^\Phi_{j,s} \cup F}(w) \downarrow = 1$, or else every $1$-extension of $\alpha$ is also in $T$.
	
	We wish to define a certain subtree $T_0$ of $T$. To this end, we first label each node of $T$ by either a finite number or the symbol $\infty$. We do this by induction on rank. If $\alpha \in T$ is terminal, label it by the least $w \geq k$ such that $\Psi^{H^\Phi_{j,s} \cup F}(w) \downarrow = 1$ for some $F \subseteq \ran(\alpha)$. Now suppose $\alpha \in T$ is not terminal, and that we have labeled all nodes of $T$ of smaller rank. In particular, this means that we have labeled all the $1$-extensions of $\alpha$ in $T$. If there is a $w \in \omega$ such that infinitely many of the $1$-extensions of $\alpha$ were labeled by $w$, we label $\alpha$ by the least such $w$. Otherwise, we label $\alpha$ by $\infty$.
	
	Now define $T_0$ as follows. Add $\emptyset$ to $T_0$, and suppose we have added some $\alpha \in T$ to $T_0$, and that this $\alpha$ is not terminal in $T$. If $\alpha$ was labeled by a finite number $w$, add to $T_0$ all $1$-extensions of $\alpha$ that were also labeled by $w$. If, on the other hand, $\alpha$ is labeled by $\infty$, and there are infinitely many $1$-extensions of $\alpha$ labeled by finite numbers, add these extensions to $T_0$. In this case, for each $w$, cofinitely many of the $1$-extensions $\beta$ of $\alpha$ must satisfy $\Psi^{H^\Phi_{j,s} \cup F}(x) \simeq 0$ for all $x$ with $k \leq x < w$ and all $F \subseteq \ran(\beta)$. If cofinitely many of the $1$-extensions of $\alpha$ were labeled by $\infty$, add these to $T_0$ instead. Thus, in any case, each non-terminal node in $T_0$ has infinitely many $1$-extensions in $T_0$, and all of these extensions have the same kind of label: they are either all labeled by one and the same finite number, provided $\alpha$ itself is labeled by this number; they are labeled by finite numbers, and only finitely many are labeled by any given number; or they are all labeled by $\infty$. Note also that every terminal node in $T_0$ is terminal in $T$.
	
	We now attempt to define a sequence $p_s \geq q_0 \geq q_1 \geq \cdots$ of conditions, and a sequence $\emptyset = \alpha_0 \preceq \alpha_1 \cdots$ of $1$-extensions in $T_0$, such that for each $n$, the condition $q_n$ forces that $\lim_y \Phi^{\vec{X}}(x,y) = j$ for all $x \in \ran(\alpha_n)$, and in fact, that $\Phi^{\vec{X}}(x,y) = j$ for all $x \in \ran(\alpha_n)$ and all $y \geq \alpha_{n+1}(n)$. Hence, $q_n$ forces that $\ran(\alpha_n)$ is homogeneous for $\Phi^{\vec{X}}$ with color $j$. If we get stuck in this process, we will be able to add some $P$ to $Y$; otherwise, we will succeed in defining $H^\Phi_{j,s+1}$.
	
	If $\alpha_0 = \emptyset$ is labeled by some $w \in \omega$, let $q_0$ be any extension of $p_s$ with $\sigma^{q_0}_0(w) =  \RADOMIL$, which exists since $w \geq k = |\sigma^{p_s}_0|$. If $\alpha_0$ is labeled by $\infty$, let $q_0 = p_s$. Next, assume we have defined $q_n$ and $\alpha_n$ for some $n$, and that $\alpha_n$ is not terminal in $T_0$. By assumption, $q_n$ forces that there is an $m \in \omega$ such that $\Phi^{\vec{X}}(x,y) = j$ for all $x \in \ran(\alpha_n)$ and all $y \geq m$. Let $S$ be the set of all $1$-extensions $\beta$ of $\alpha_n$ with $\beta(n) \geq m$, so that $S$ is infinite. We consider the following two cases.
	
	Case 1: $\alpha_n$ is labeled by $\infty$, but its $1$-extensions are all labeled by finite numbers. Let $P$ be the set of all pairs $\seq{x,w}$ such that $x$ is equal to $\beta(n)$ for some $\beta \in S$ with label $w \geq |\sigma^{q_n}_0|$. As noted above, there are infinitely many numbers $w$ with $\seq{x,w} \in P$ for some $x$. Now if there is an extension $q$ of $q_n$ and an $\seq{x,w} \in P$ such that $\sigma^q_0(w) =  \RADOMIL$ and $q$ forces that $\lim_y \Phi^{\vec{X}}(x,y) = j$, let $q_{n+1} = q$, and let $\alpha_{n+1}$ be any $\beta \in S$ with $\beta(n) = x$ and label $w$. Otherwise, we have that if $q$ is any condition extending $q_n$ with $\sigma^q_0(w) =  \RADOMIL$ for some $\seq{x,w} \in P$, then no extension of $q$ can force that $\lim_y \Phi^{\vec{X}}(x,y) = j$. This means that $q$ itself forces that $\lim_y \Phi^{\vec{X}}(x,y) \simeq 1-j$. But notice that for every $e$, the set
	\[
		W_e = \{ q \in \mathbb{C} : \exists \seq{x,w} \in P~(w \geq e \wedge \sigma^q_0(w) =  \RADOMIL) \}
	\]
	is dense in $\mathbb{C}$, and that it is $\Sigma^0_1$-definable from $P$. In this case, we claim that $\vec{X} \oplus P$ will compute an infinite homogeneous set for $\Phi^{\vec{X}}$, and so we let $Y_{s+1} = Y_s \cup \{P\}$. To prove the claim, note that since the sequence of conditions $p_0 \geq p_1 \geq \cdots$ will be generic relative to $P$, for each $e$ there will be some condition in this sequence meeting $W_e$. Thus, for each $e$ there will be an $\seq{x,w} \in P$ with $w \geq e$ such that $A_0(w) =  \RADOMIL$. This means that to compute an infinite limit-homogeneous set for $\Phi^{\vec{X}}$ (and from there an infinite homogeneous one), $\vec{X} \oplus P$ will only need to search for pairs $\seq{x,w} \in P$ with $A_0(w) =  \RADOMIL$, assured that these can be found for arbitrarily large $w$, and that $\lim_y \Phi^{\vec{X}}(x,y)$ must be $1-j$.

	Case 2: otherwise. Let $P$ be the set of all $x$ equal to $\beta(n)$ for some $\beta \in S$. If there is an extension of $q_n$ that, for some $x \in P$, forces that $\lim_y \Phi^{\vec{X}}(x,y) = j$, we let $q_{n+1}$ be this extension, and let $\alpha_{n+1}$ be any $\beta \in S$ with $\beta(n) = x$. If, on the other hand, there is no extension of $q_n$ as above, then $P$ will be an infinite limit-homogeneous set for $\Phi^{\vec{X}}$ with color $1-j$. Hence, $\vec{X} \oplus P$ will compute an infinite homogeneous set for $\Phi^{\vec{X}}$. In this case, we let $Y_{s+1} = Y_s \cup \{P\}$.

	We complete this stage of the construction as follows. If we ended up adding some $P$ to $Y$, we do nothing more. Otherwise, we must have succeeded in defining $\alpha_{n+1}$ from every non-terminal $\alpha_n$. But since $T_0$, being a subtree of $T$, is well-founded, some $\alpha_n$ must be terminal in $T_0$. Choose any $F \subseteq \alpha_n$ such that $\Psi^{H^\Phi_{j,s} \cup F}(w) \downarrow = 1$ for some $w \geq k$, say with use $u$. Let $p_{s+1} = q_n$, $H^\Phi_{j,s+1} = H^\Phi_{j,s} \cup F$, and $I_{s+1} = \{x \in I_s : x > u\}$.

	\medskip
	\emph{Verification.} It is clear from the construction that the $\mathcal{P}$ and $\mathcal{Q}$ requirements are all satisfied. For the $\mathcal{R}$ requirements, suppose that $\Phi^{\vec{R}}$ has no $(\vec{X} \oplus P)$-computable infinite homogeneous set for any $P \in Y$, and fix $\Psi$ and $j$. Then at every stage dedicated to $\mathcal{R}_{\Phi,\Psi,j}$ we succeed in adding one more element $w$ to the set defined by $H^\Phi_j$, whilst ensuring that $A_0(w)$ is $0$ or $1$, depending as the number of previous such stages was even or odd. Hence, $H^\Phi_j$ intersects both $A_0$ and $\overline{A_0}$ infinitely often, as needed. This completes the proof.
\end{proof}

\begin{corollary}\label{C:COHnscredSRT22}
$\COH \nscred \SRT^2_2$.
\end{corollary}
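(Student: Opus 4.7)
The plan is to read off the corollary almost immediately from Theorem~\ref{T:nscred}, whose statement was engineered to isolate precisely the computability-theoretic content needed here. I would argue by contradiction. Suppose $\COH \scred \SRT^2_2$, and let $\vec{X}$ and $Y$ be the objects supplied by Theorem~\ref{T:nscred}. Viewing $\vec{X}$ as an instance of $\COH$, the assumed reduction produces an $\vec{X}$-computable stable coloring $c : [\omega]^2 \to 2$ (an instance of $\SRT^2_2$) with the property that every infinite homogeneous set for $c$ computes an infinite $\vec{X}$-cohesive set.

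Next I would apply the second clause of Theorem~\ref{T:nscred} to this particular $c$ and split on the dichotomy it provides. In the first case, $c$ has an $(\vec{X} \oplus P)$-computable infinite homogeneous set $H$ for some $P \in Y$. Then by our choice of $c$, $H$ computes an infinite $\vec{X}$-cohesive set $W$, so $W \leq_T H \leq_T \vec{X} \oplus P$; but this contradicts the first clause of the theorem, which asserts that no $(\vec{X} \oplus P)$-computable infinite set is $\vec{X}$-cohesive for any $P \in Y$. In the second case, $c$ has, for each $j < 2$, an infinite homogeneous set $H_j$ with color $j$ that computes no infinite $\vec{X}$-cohesive set, flatly contradicting the defining property of $c$.

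Since both branches of the dichotomy lead to a contradiction, the assumption that $\COH \scred \SRT^2_2$ cannot hold, and the corollary follows. There is no real obstacle to surmount at this stage: all of the combinatorial, forcing-theoretic, and labelled-tree work has already been absorbed into the proof of Theorem~\ref{T:nscred}, and the corollary is essentially a routine unpacking of its conclusion against the definition of $\scred$.
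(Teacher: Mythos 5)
Your proof is correct and is precisely the intended, routine deduction from Theorem~\ref{T:nscred}; the paper states the corollary without an explicit proof because the theorem was engineered so that this contradiction falls out immediately.
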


We do not know how to extend our technique to show more generally that $\COH \nscred \SRT^2_{<\infty}$, or even that $\COH \nscred \SRT^2_k$ for any $k \geq 2$. The reason is that when more colors are involved, it no longer follows that if no element of some set limits to a given color, then all the elements of that set limit to a given other color. Thus, adding such a set to the collection $Y$ above does not appear to produce a homogeneous set. Now it is not difficult to generalize our argument to get around this problem when adding a set to $Y$ at a stage dedicated to a $\mathcal{Q}$ requirement, or in Case 2 of a stage dedicated to a $\mathcal{R}$ requirement. But Case 1 seems more involved.

\section{Summary and questions}

We summarize the principal consequences of our results, and how they fit in with prior known ones, in Figures~\ref{F:uniform} and~\ref{F:strong}, the first for the case of $\ured$ and the second for the case of $\scred$. Here, an arrow from $\mathsf{P}$ to $\mathsf{Q}$ means that $\mathsf{Q}$ is reducible to $\mathsf{P}$.

We conclude with some of the questions left over from, and raised by, our work. We have already mentioned these in the text above, but we collect them together here for convenience.

\begin{question}
	Is it the case that $\COH$ is generalized uniformly reducible to $\SRT^2_{<\infty}$? (By~\cite{HJ-TA}, Propositions 4.7 and 4.8, we can replace $\SRT^2_{<\infty}$ here by $\D^2_2$.)
\end{question}

\begin{question}
	For $j < k$, is it the case that $\RT^1_k \ured \SRT^2_j$?
\end{question}

\begin{question}
	For $k > 2$, is it the case that $\COH \scred \SRT^2_k$? Is it the case that $\COH \scred \SRT^2_{<\infty}$?
\end{question}

\noindent Finally, we would like to know more about the complexity of the construction in Section~\ref{S:COH_strong}. Hirschfeldt and Jockusch~\cite[Theorem 3.9]{HJ-TA} gave another argument about $\scred$ involving higher levels of the hyperarithmetical hierarchy, that Patey~\cite[Theorem 3.2]{Patey-TA} obtained independently using a $\Delta^0_2$ construction. Our argument is very different from the one in~\cite{HJ-TA}, but we can ask the same question about whether the repeated use of hyperjumps there is really necessary.

\begin{question}
	Can the family $\vec{X}$ constructed in the proof of Theorem~\ref{T:nscred} be chosen to be arithmetical, or at least hyperarithmetical?
\end{question}

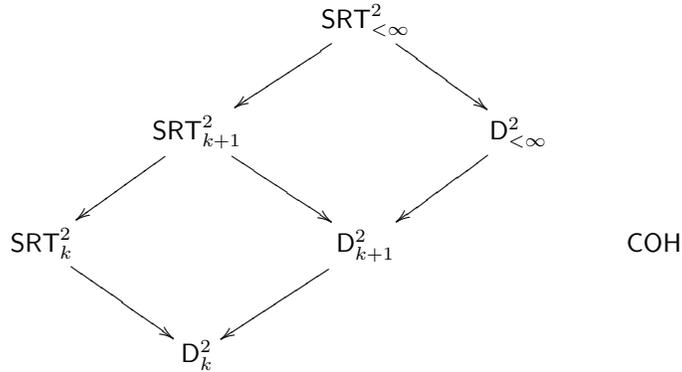
\begin{figure}
\[
\xymatrix{
& & \SRT^2_{<\infty} \ar[dl] \ar[dr] \\
& \SRT^2_{k+1} \ar[dl] \ar[dr] & & \D^2_{<\infty} \ar[dl]\\
\SRT^2_k \ar[dr] & & \D^2_{k+1} \ar[dl] & & \COH\\
& \D^2_k
}
\]
\caption{Relationships between $\SRT^2$, $\D^2$, and $\COH$ under $\ured$, with $k \geq 2$ arbitrary. All the implications here are straightforward, and no reductions hold other than the ones shown, which is justified as follows. That the arrows from $\SRT^2_{<\infty}$ to $\D^2_{<\infty}$, from $\SRT^2_{k+1}$ to $\D^2_{k+1}$, and from $\SRT^2_k$ to $\D^2_k$ cannot be reversed is by Corollary~\ref{C:SRT22nuredD2}. That there are no arrows from $\D^2_k$ to $\D^2_{k+1}$, from $\D^2_{k+1}$ to $\D^2_{<\infty}$, from $\SRT^2_k$ to $\SRT^2_{k+1}$, or from $\SRT^2_{k+1}$ to $\SRT^2_{<\infty}$ follows by a result of Patey~\cite[Corollary 3.6]{Patey-TA}. Finally, that there are no arrows from $\COH$ to any of these principles follows by results of Hirschfeldt et al.~\cite[Theorems 2.3 and 3.7]{HJKLS-2008}, while that no arrows go the other way is by Corollary~\ref{C:COHnuredSRT2}.}\label{F:uniform}
\end{figure}

\begin{figure}
\[
\xymatrix{
& & \SRT^2_{<\infty} \ar[dl] \ar[dr] \\
& \SRT^2_{k+1} \ar[dl] \ar[dr] & & \D^2_{<\infty} \ar[dl]\\
\SRT^2_k \ar[dr] & & \D^2_{k+1} \ar[dl] \\
& \D^2_k \ar[d] \ar@{-->}[rr]|-{?} & & \COH\\
& \D^2_2
}
\]
\caption[]{Relationships between $\SRT^2$, $\D^2$, and $\COH$ under $\ured$, with $k \geq 3$ arbitrary. No reductions hold other than the ones shown, except possibly from $\D^2_j$ to $\COH$ for some $j \geq 3$. That the arrows from $\SRT^2_{<\infty}$ to $\D^2_{<\infty}$, from $\SRT^2_{k+1}$ to $\D^2_{k+1}$, and from $\SRT^2_k$ to $\D^2_k$ cannot be reversed is by Corollary~\ref{C:SRT22nscredD2}. That there is no arrow from $\D^2_2$ to $\COH$ is Corollary~\ref{C:COHnscredSRT22}. The other non-reducibilities are justified as in the uniform case.}\label{F:strong}
\end{figure}


\bibliography{/Users/damir/Documents/Papers/Papers.bib}
\bibliographystyle{/Users/damir/Dropbox/Latex/Templates/Files/simple.bst}

\end{document}